\newtheorem{theorem}{Theorem}[section]
\newtheorem{definition}[theorem]{Definition}
\newtheorem{lemma}[theorem]{Lemma}
\newtheorem{remark}[theorem]{Remark}
\newtheorem{proposition}[theorem]{Proposition}
\newtheorem{example}[theorem]{Example}
\numberwithin{equation}{section}
\renewcommand{\H}{{\mathcal H}}
\def\C{\mathbb C}
\def\R{\mathbb R}
\def\C{\mathbb C}
\def\R{\mathbb R}
\def\N{\mathbb N}
\def\al{\alpha}
\def\be{\beta}
\def\rh{\rho}
\def\th{\theta}
\def\GA{\Gamma}
\def\ve{\varepsilon}
\def\la{\lambda}
\def\om{\omega}
\def\va{\varphi}
\def\ta{\tau}
\def\va{\varphi}
\def\g{\mathfrak{g}}
\def\g{\mathfrak g}
\def\la{\lambda}
\def\ve{\varepsilon}
\def\si{\sigma}
\def\om{\omega}
\def\ga{\gamma}
\def\ph{\phi}
\def\ch{\chi}
\def\ta{\tau}
\def\N{\mathbb{N}}
\def\Z{\mathbb{Z}}
\def\R{\mathbb{R}}
\def\C{\mathbb{C}}
\def\ol#1{\overline{#1}}
\def\nn{\nonumber}
\def\noop#1{\Vert #1\Vert_{\rm op}}
\def\R{{\mathbb R}}
\def\C{{\mathbb C}}
\def\N{{\mathbb N}}
\def\Z{{\mathbb Z}}
\def\T{{\mathbb T}}
\def\A{{\mathcal A}}
\def\B{{\mathcal B}}
\def\D{{\mathcal D}}
\def\F{{\mathcal F}}
\def\H{{\mathcal H}}
\def\K{{\mathcal K}}
\def\iy{\infty}
\def\ol#1{\overline{#1}}
\def\hb#1{\hbox{#1}}
\def\no#1{\Vert #1\Vert }
\def\ey{\emptyset}
\def\wh#1{\widehat{#1}}
\def\exp#1{{\rm exp}( #1)}
\def\ker#1{\hb{ker}(#1)}
\def\res#1{_{\vert #1}}
\def\inv{^{-1}}
\def\limk{\lim_{k\to\infty}}
\def\limm{\lim_{m\to\infty}}
\def\limn{\lim_{n\to\infty}}
\def\hb#1{\hbox{#1}}
\def\ker#1{\hb{ker}(#1)}
\def\dim#1{\rm{dim}(#1)}
\def\L1#1{L^1(#1)}
\def\L#1#2{L^{#1}(#2)}
\def\l#1#2{L^{#1}(#2)}
\def\ti{\times }
\def\lef({\left(}
\def\rig){\right)}
\begin{document}
\title{ The  $C^*$-algebra of the semi-direct product $K\ltimes A $.}
\author{Regeiba Hedi And Ludwig Jean }
\address{ Universit\'{e} de Sfax, Facult\'{e} des Sciences Sfax,  BP
1171, 3038 Sfax, Tunisia\\
Université de Gabés
Facult\'e des Sciences de Gab\'es
Cit\'e Erriadh 6072 Zrig Gab\'es Tunisie.
} \email{rejaibahedi@gmail.com}
\address{ Universit\'e de Lorraine,
Institut Elie Cartan de Lorraine,
UMR 7502, Metz, F-57045, France.}
\email{jean.ludwig@univ-lorraine.fr}

\begin{abstract}
Let $G=K\ltimes A$ be the  semi-direct product group of a compact group  $K$ acting on  an abelian locally compact group  $A$.
We describe the $C^*$-algebra
$C^*(G)$ of $G$ in terms of an algebra of operator fields defined over the spectrum of $G $, generalizing previous results obtained for some special classes of such groups.

\end{abstract}
 \maketitle
\section{\bf{Introduction}}\label{sec:1}

It is wellknown that for a simply connected nilpotent Lie group and
 more
generally for an exponential solvable Lie group $ G=\exp\g $, its  dual
 space $
\widehat G$ is homeomorphic to the space of co-adjoint orbits $ \g^*/G $
 through
the Kirillov mapping (see \cite{Lep-Lud}). If we consider semi-direct
 products
$ G=K\ltimes N $ of compact connected Lie groups $ K $ acting on simply
connected nilpotent Lie groups $ N $, then again we have an orbit
 picture of the
dual space of $ G $ (see \cite{Lipsman}) and one can imagine that the
topology of $ \widehat G $ is linked to the topology of the admissible
 co-adjoint
orbits.\\
By definition the $C^*$-algebra $C^*(G)$  of a locally compact group $G$ is the
completion of the convolution algebra $L^1(G)$ with respect to the norm
$$\no{f}_{C^*(G)}:=\underset{\pi\in\widehat G}{\sup}\noop{\pi(f)}.$$
The unitary dual or spectrum $\widehat{C^*(G)}$ of $C^*(G)$ is in bijection with the dual space $\widehat G$ of $G$. Define the Fourier transform $\F$  on $C^*(G)$ by
$$\F(c)(\pi)=\pi(c)\in\B(\H_\pi),\ \text{ for all } \pi\in\widehat G,\ c\in C^*(G).$$
Then $C^*(G)$
can be identified with the  sub-algebra $\widehat{C^*(G)} $ of the big $C^*$-algebra
$\ell^\iy(\widehat G)$ of bounded operator fields given by
$$\ell^\iy(\widehat G)=
\left\{\ph:\widehat G\longrightarrow\underset{\pi\in\widehat G}{\bigcup}\B(\H_\pi);\ph(\pi)\in \B(\pi), \no \ph_\iy:=\underset{\pi\in\widehat G}{\sup}\noop{f(\pi)}<\iy\right\}.$$
Here $\B(\H) $ denotes the space of bounded linear operators on the Hilbert space $\H $.

In order to understand the structure of the algebra $C^*(G) $, we must determine the special conditions which determine the operator fields $\ph\in \widehat{C^*(G)}
$.

For instance if $G=K $, then it is easy to see that
\begin{eqnarray*}\label{}
 \nn \widehat{C^*(K)}
 &= &
 \left\{F:\widehat K\longrightarrow\underset{\pi\in\widehat K}{\bigcup}\B(\H_\pi)|\ F(\pi)\in\B(\H_\pi),\   \lim_{\pi\to\infty}\noop{ F(\pi)}=0\right\}.
 \end{eqnarray*}
Since $K $ is compact, its irreducible unitary representations are finite dimensional. So $\pi(c) $ is always trivially a compact operator. Furthermore, the spectrum of $K $ has the discrete topology. So there is no continuity condition for the operator fields.

If $G=A $ is abelian, then $\widehat A $ is  a   locally compact Hausdorff space and
\begin{eqnarray*}\label{}
 \nn \widehat{C^*(A)}
&= &
 \left\{ \ph: \widehat A\to \C; \ph \text{ continuous and vanishes at infinity}\right\}.
 \end{eqnarray*}
 If $G$ is more generally a locally compact  group of the form $K\ltimes A $, then its dual space $\widehat{G} $
has a complicated structure determined by the $K $-orbits in $\widehat A $ and the spectra of the stabilizer groups of the elements of $\widehat A $. The topology of $\widehat{G} $ has been described in the paper of W. Bagget  \cite{Bag}, which will be used  intensively here.
For the motion groups of the form  $G_n=SO(n)\ltimes\R^n$
the conditions for $\widehat{C^*(G_n)} $ have been described explicitly in \cite{Lud-Ell-Abd}.
For some other groups only a detailed description of the topology of $\widehat G $ is known
(see for instance \cite{Ben-Rah} and \cite{Re-Ra}).

 In section $2$, we recall the  results of Bagget on the topology of $\widehat{K\ltimes A} $, define the Fourier transform for $C^*(G)  $  and discover the conditions which determine the algebra $\widehat{C^*(G)}
 $
inside the big algebra $\ell^\iy(\widehat G)$ (see Definition \ref{def0.3.2}).
To see what are the difficulties, consider a converging net $\Pi_M=(\pi_m=\pi_{\mu_m,\ch_m})_{m\in M} $ of irreducible representations of $G $. Here $(\ch_m)_{m\in M} $ is a net of unitary characters of the abelian group  $A $, $K_{m}$ is  the stabilizer of $\ch_m $ in $K $ and  $\mu_m $ is an element of $\widehat{K_m}, m\in M$.
Then  $\pi_m=\text{ind}_{K_m\ltimes A}^G \mu_m\otimes \ch_m $.
The problem is to  understand for $a\in C^*(G) $ the behavior  of the net of operators $(\pi_m(a))_{m\in M} $ acting on the  different Hilbert spaces $\H_{\pi_m} $.
We shall  in fact construct for a certain converging subnet of the net $\Pi_M $ a common Hilbert space $\H_M $, such that $\H_M  $ contains a copy $\H_m' $ of the Hilbert space $\H_m $ of $\pi_m $ and  a projection $P_m:\H_M\to \H_m' $ for every $m$ in the subnet and we show that  the essential  condition for an operator field $\Phi $ to be an element of  $\widehat{C^*(G)} $ is an operator norm convergence of the net  $(\Phi(\pi_m))_{m\in M} $ to a limit operator $\si(\Phi\res L)\in \B(\H_M) $ which is determined by the restriction $\Phi\res L $ of the operator field $\Phi $ to the limit set $L $ of the subnet.

In the last paragraph we present as example  the group
$G_{n,m}=SO(n)\times SO(m)\ltimes \R^{n+m}$, for $n,m\in\N$.

  \section{\bf{The $C^*$-algebra of the group $G=K\ltimes A.$}}
  \subsection{\bf{Preliminaries.}}
  Let $K$ be locally compact group, $A$ be an abelian   and suppose $\Psi$ is homomorphism of $K$ into the group
  of automorphisms of $A$ such that the mapping $\left.\begin{array}{cccc}
 & & & \\
\Psi: & K\ti A& \longrightarrow& A\\
&(k,a)&\mapsto&\Psi(k)(a)\end{array}\right.$ is continuous. For simplicity, we write the action of the automorphism $\Psi(k)$ on an
element $a$ of $A$ as $k\cdot a.$

The semi-direct product $G=K\ltimes A$ of the groups $K$ and $A$ is  the following locally compact group. $G$ is the topological product of $K $ and $A $ and $G $ is equipped with the group law
\begin{eqnarray*}
 (k,a)\ldotp(h,b)=(kh,h\inv\cdot a+ b),\ \forall (k,a),\ (h,b)\in G,
 \end{eqnarray*}
 where we write the multiplication  in $K$ with the symbol $\cdot  $  and multiplication  in $A$ additively as $+ $.

 The group $A$ can be homeomorphically and isomorphically identified with the closed normal subgroup of $G$ consisting
 of all pairs $(1_K,a)$, where $a$ is an element of $A$. Also, $K$ is homeomorphic and isomorphic to the closed subgroup of $G$
 consisting of all pairs $(k,0_A)$, where $k$ is an element of $K$. (We write $1_K$ for the multiplicative identity of $K$, $0_A$ for the additive
 identity of $A$, and $(1_K,0_A)$ for the identity of $G$).

 Now, let $k\in K$ and $\chi: A\to \T\in \widehat A$  be  a unitary character of  $A$, and let $k\cdot\chi$ to be the character of  $A$ defined by
 $$k\cdot\chi(b)=\chi{(k\inv\cdot  b}), b\in A.$$
 Thus $K$ acts on the  left as a group of continuous transformations on $\widehat A$. If $\chi$ is in $\widehat A$,
 the stability subgroup $K_\chi$ of $\chi$ is the closed subgroup
 $$K_\chi=\{k\in K|\ k\cdot \ch=\ch\}.$$
 \begin{definition}\label{def0.1}$ $
  \begin{enumerate}
   \item We define the space  $\K(G)$ to be the collection of all closed subgroups of $G$ equipped with the compact-open topology $($see \cite{Fell-2}$)$.
   \item Let $\A(G)$ denote the set of all pairs $(C,T)$ where $C$ is a closed subgroup of $G$ and $T$ is an irreducible unitary representation of $C.$
  \end{enumerate}

 \end{definition}

\begin{proposition}\label{prop0.1}[see, \cite{Bag} 2.1-D]
 Any closed subgroup of $G$ which contains $A$ is of the form $J\ltimes A$, where $J$ is a closed subgroup of $K$.
 Further, a net $(J_n)_n$ of closed subgroups of $K$ converges to $J$ in $\K(K)$ if and only if the net
 $(J_n\ltimes A)_n$ of subgroups of $G$ converges to the subgroup $J\ltimes A$ in $\K(G).$
\end{proposition}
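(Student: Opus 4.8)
The plan is to realize the assignment $J\mapsto J\ltimes A$ as a homeomorphism from $\K(K)$ onto the subspace of $\K(G)$ formed by the closed subgroups of $G$ containing $A$; the two sentences of the Proposition are then the ``bijective'' part and the ``bicontinuous'' part of this statement.

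\emph{First assertion.} Let $p\colon G\to K$, $p(k,a)=k$, be the canonical projection; it is a continuous open surjective homomorphism with kernel $A$, so it identifies $K$ with $G/A$. If $H$ is a closed subgroup of $G$ with $A\subseteq H$, put $J:=p(H)\leq K$. Since $A=\ker p\subseteq H$ we have $H=p^{-1}(p(H))=p^{-1}(J)$, which as a set is $J\times A$ and hence, being a subgroup of $G$, is exactly the semidirect product $J\ltimes A$ for the restricted action. Moreover $J$ is closed in $K$: either because $p$, being open and surjective, is a quotient map, so $J$ is closed as its preimage $p^{-1}(J)=H$ is closed; or directly because $K\setminus J=p(G\setminus H)$ is open, using that $p$ is open and that $p^{-1}(k)=\{k\}\times A\subseteq H$ iff $k\in J$. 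This proves the first statement, and the converse (every $J\ltimes A$ with $J$ closed in $K$ is a closed subgroup of $G$) is clear since $J\ltimes A=p^{-1}(J)$.

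\emph{Convergence statement.} Here I would argue with nets, using the standard description of the compact-open (Fell) topology on the closed subsets of a locally compact Hausdorff space: a net $(F_n)_n$ converges to a closed set $F$ if and only if (i) $C\cap F_n=\emptyset$ eventually whenever $C$ is compact with $C\cap F=\emptyset$, and (ii) $V\cap F_n\neq\emptyset$ eventually whenever $V$ is open with $V\cap F\neq\emptyset$. The two implications then reduce to transporting compact, resp.\ open, test sets through $p$ and through the section $k\mapsto(k,0_A)$, the elementary facts in play being that $G=K\times A$ as a topological space and that for $k\in K$, $a\in A$ one has $(k,a)\in J_n\ltimes A\iff k\in J_n$. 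Assume $J_n\to J$ in $\K(K)$. If $C\subseteq G$ is compact with $C\cap(J\ltimes A)=\emptyset$, then $p(C)$ is compact and $p(C)\cap J=\emptyset$ (any $k\in p(C)\cap J$ would give a point of $C\cap(J\ltimes A)$), so $p(C)\cap J_n=\emptyset$ eventually and thus $C\cap(J_n\ltimes A)=\emptyset$ eventually. If $V\subseteq G$ is open and meets $J\ltimes A$, pick $(k,a)\in V$ with $k\in J$ and a box $U\times W\subseteq V$ about it; then $U$ meets $J$, hence $U$ meets $J_n$ eventually, and any $k_n\in U\cap J_n$ gives $(k_n,a)\in V\cap(J_n\ltimes A)$. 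Conversely assume $J_n\ltimes A\to J\ltimes A$ in $\K(G)$. For compact $C\subseteq K$ with $C\cap J=\emptyset$, the compact set $C\times\{0_A\}$ misses $J\ltimes A$, hence misses $J_n\ltimes A$ eventually, i.e.\ $C\cap J_n=\emptyset$ eventually; for open $V\subseteq K$ meeting $J$, the open set $V\times A$ meets $J\ltimes A$, hence meets $J_n\ltimes A$ eventually, forcing $V\cap J_n\neq\emptyset$ eventually. By the net criterion this is precisely $J_n\to J$ in $\K(K)$.

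I do not expect a genuine obstacle. The only points requiring care are: using the correct net formulation of the Fell topology — in particular that convergence can be tested on the subbasic neighbourhoods alone and what ``eventually'' means for a net — and, in the first assertion, invoking that an open continuous surjection is a topological quotient map, which is what makes the image $J=p(H)$ closed.
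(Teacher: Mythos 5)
Your proof is correct. Note, however, that the paper offers no proof of this Proposition at all: it is quoted verbatim from Baggett (2.1-D of \cite{Bag}), who in turn relies on Fell's description of the topology on $\K(G)$, so there is no in-paper argument to compare yours against. Your self-contained verification is the natural one and is complete: the first assertion is exactly the correspondence theorem for the open quotient homomorphism $p\colon G\to K$ with kernel $A$ (the point that an open continuous surjection is a quotient map, so that $J=p(H)$ is closed because $p^{-1}(J)=H$ is, is correctly identified as the only place needing care), and the second assertion follows from the standard net criterion for Fell convergence of closed sets --- eventually missing every compact set disjoint from the limit, eventually meeting every open set that meets the limit --- which may indeed be tested on subbasic neighbourhoods, together with the product structure $G=K\times A$ and the equivalence $(k,a)\in J_n\ltimes A\iff k\in J_n$. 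The choice of test sets $C\times\{0_A\}$ and $V\times A$ in the converse direction is exactly what is needed. No gap.
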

Let now $\chi\in \widehat A$. Then the Hilbert space of the representation $\delta=\text{ind}_{\{1_K\}\ltimes A}^G\chi$  can be identified with $L^2(K)$,
thus if $(k,a)\in G$, $f\in L^2(K)$ and $h\in K$
we have
\begin{eqnarray}\label{calcdelta}
 \nn\delta(k,a)(f)(h)&=&\delta((k,0_A)(1_K, a))(f)(h)\\
 \nn&=&\chi((h\inv k)\cdot a)f(k^{-1}h).
\end{eqnarray}

\subsection{\bf{The  topology of the dual space of the group $G$.}}$ $\\
The dual space or spectrum of $G$ has been described by G.Mackey (for details, see \cite{Mac1} and \cite{Mac2}).

For each character $\chi\in \widehat A$ and any irreducible unitary representation $\mu$ of the stabilizer $K_\chi$ of $\chi$
in $K$, we have that
\begin{eqnarray}
 \si_{(\mu,\chi)}:=\mu\otimes\chi
\end{eqnarray}
is an irreducible unitary representation of
\begin{eqnarray*}\label{}
 \nn G_\chi:=K_\chi\ltimes A,
 \end{eqnarray*}
whose restriction to $A$ is a multiple of $\chi$ (see \cite{Bag} proposition $2$ p.181)  and the induced representation
$\pi_{(\mu,\chi)}:=\text{ind}_{G_\chi}^G\si_{(\mu,\chi)}$ is an irreducible representation of $G$.

On the other hand, every irreducible unitary representation $\tau_\la$ of $K$ extends to an irreducible representation
(also denoted by $\tau_\la$) of the entire  group $G$ defined by
$$\tau_\la(k,a):=\tau_\la(k),\ \ (k,a)\in G.$$

The following Propositions give  the relationship between $\widehat G$ and the set of all elements  $\pi_{(\mu,\chi)}$ (see \cite{Mac3})
\begin{proposition}\label{prop0.2}
 Let $\pi$ be an irreducible unitary representation of $G$. Then $\pi\res A $ is supported by a $K $-orbit $\th $ in $\widehat A $.
 Suppose $\chi$ is an element of $\theta$. Then $\pi$ is equivalent to a representation of the form $\pi_{(\mu,\chi)}$ where $\mu$
 is an irreducible unitary representation of $K_\chi.$
\end{proposition}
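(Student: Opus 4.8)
The plan is to run Mackey's ``little-group'' method for the semi-direct product $G=K\ltimes A$, the key point being that the action of the compact group $K$ on $\widehat A$ is regular. First I would restrict $\pi$ to the closed normal abelian subgroup $A$: by the SNAG theorem the restriction $\pi\res A$ is given by a regular projection-valued measure $P$ on the locally compact Hausdorff space $\widehat A$, so that $\pi(1_K,a)=\int_{\widehat A}\chi(a)\,dP(\chi)$ for all $a\in A$. Conjugating by $\pi(k,0_A)$ and using the relation $(k,0_A)(1_K,a)(k,0_A)\inv=(1_K,k\cdot a)$ yields the covariance identity $\pi(k,0_A)\,P(E)\,\pi(k,0_A)\inv=P(k\cdot E)$ for every Borel set $E\subseteq\widehat A$ and every $k\in K$. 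Hence $(\pi,P)$ is a system of imprimitivity for $G$ over the $K$-space $\widehat A$, and the support of $P$ is a closed $K$-invariant subset of $\widehat A$.

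The next step is to show that $P$ is concentrated on one $K$-orbit. Since $\pi$ is irreducible, the measure class of $P$ is ergodic for the $K$-action (a $K$-invariant Borel subset would produce, via $P$, a nontrivial $G$-invariant projection). This is where compactness of $K$ is used: every orbit $K\cdot\chi$ is then compact, hence closed, the orbit space $\widehat A/K$ is Hausdorff, and in particular the $K$-space $\widehat A$ is countably separated. By the standard fact that an ergodic quasi-invariant measure on a countably separated transformation space lives on a single orbit, there is a $\chi\in\widehat A$ with $\supp P=\theta:=K\cdot\chi$, which is homeomorphic to $K/K_\chi$. This already proves the first assertion, and it shows that any point of $\theta$ may be chosen as base point.

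Fixing such a $\chi$, I would feed the now transitive system of imprimitivity over $G/G_\chi$, where $G_\chi=K_\chi\ltimes A$ is the stabilizer of $\chi$, into Mackey's imprimitivity theorem; this produces a unitary representation $\rho$ of $G_\chi$ with $\pi\cong\text{ind}_{G_\chi}^G\rho$, and $\rho$ is irreducible because $\pi$ is. Since the restriction of $\text{ind}_{G_\chi}^G\rho$ to $A$ involves exactly the $K$-translates of the characters occurring in $\rho\res A$, and these must fill out $\theta=K\cdot\chi$, the representation $\rho\res A$ is a multiple of $\chi$. As $\chi$ is $K_\chi$-invariant, $\tilde\chi(k,a):=\chi(a)$ is a well-defined unitary character of $G_\chi$, so $\rho\otimes\ol{\tilde\chi}$ is trivial on $A$, hence factors through $K_\chi$ and coincides with some $\mu\in\widehat{K_\chi}$. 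Unwinding the twist gives $\rho\cong\mu\otimes\chi$, i.e.\ $\pi\cong\pi_{(\mu,\chi)}$.

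The main obstacle, and the only place beyond formal manipulation of the Mackey machine, is the reduction to a single orbit, i.e.\ the regularity of the $K$-action on $\widehat A$: for a general locally compact group this step can fail, but compactness makes the orbits closed and the orbit space Hausdorff, so the ergodic-decomposition argument applies. The rest is bookkeeping with the imprimitivity theorem and with the twist by the extended character $\tilde\chi$.
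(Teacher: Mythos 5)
Your argument is the standard Mackey little-group analysis (SNAG plus the imprimitivity theorem, with compactness of $K$ guaranteeing that the action on $\widehat A$ is regular so that the ergodic spectral measure lives on a single closed orbit), which is exactly the route the paper takes by citing Mackey \cite{Mac3} and Baggett \cite{Bag} without reproducing the proof. The proposal is correct; at most one could note that compactness of $K$ even lets you separate two distinct (closed) orbits by disjoint $K$-invariant open sets and contradict irreducibility directly, avoiding any appeal to countable separation.
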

\begin{proposition}\label{prop0.3}
 Let $\pi$ and $\theta$ be as in the above proposition. Assume $\chi$ and $\chi'$ are elements of $\theta.$ i.e, $\chi'=k\cdot\chi$ for some element
 $k$ of $K$. Suppose further that $\pi$ is equivalent to $\pi_{(\mu,\chi)}$ and also is equivalent to $\pi_{(\mu',\chi')}$
 for two elements $\mu\in \widehat{K_\chi},\mu'\in\widehat K_{\chi'} $. Then
\begin{enumerate}
 \item $G_{\chi'}=k\cdot G_\chi\cdot k\inv.$

 \item the representation $h\mapsto \mu'(k hk\inv), h\in K_\chi$,  is equivalent to the representation $\mu$.

\end{enumerate}

\end{proposition}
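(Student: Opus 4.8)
For assertion (1) I would argue directly in $G$, using the embedding $K\hookrightarrow G$, $k\mapsto(k,0_A)$. Since $K$ is compact, $G=K\ltimes A$ and every stabilizer $G_\chi$ are unimodular, and conjugation by $(k,0_A)$ is an automorphism of $G$ that preserves $A$ and satisfies $(k,0_A)(h,a)(k,0_A)\inv=(khk\inv,k\cdot a)$. So it is enough to check that $kK_\chi k\inv=K_{\chi'}$ when $\chi'=k\cdot\chi$: for $h\in K$ one has $(khk\inv)\cdot\chi'=(khk\inv)\cdot(k\cdot\chi)=(kh)\cdot\chi=k\cdot(h\cdot\chi)$, and this equals $\chi'=k\cdot\chi$ exactly when $h\in K_\chi$. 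Hence $G_{\chi'}=K_{\chi'}\ltimes A=(kK_\chi k\inv)\ltimes A=(k,0_A)\,G_\chi\,(k,0_A)\inv$.

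For assertion (2), I would first reduce to the case $\chi=\chi'$. Recall that, for a closed subgroup $C$ of a unimodular group $G$, an element $g\in G$ and a unitary representation $\si$ of $C$, inducing the conjugated representation from the conjugate subgroup gives an equivalent representation of $G$. Applied here, (1) shows that $\pi_{(\mu',\chi')}=\text{ind}_{G_{\chi'}}^G(\mu'\otimes\chi')$ is equivalent to the representation of $G$ induced from $G_\chi$ by $y\mapsto(\mu'\otimes\chi')\big((k,0_A)\,y\,(k,0_A)\inv\big)$. Using the conjugation formula $(k,0_A)(h,a)(k,0_A)\inv=(khk\inv,k\cdot a)$ and the identity $\chi'(k\cdot a)=\chi(a)$, this representation of $G_\chi=K_\chi\ltimes A$ equals $(\mu'\circ c_k)\otimes\chi$, where $c_k(h):=khk\inv$ carries $K_\chi$ isomorphically onto $K_{\chi'}$. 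Hence $\pi_{(\mu',\chi')}\cong\pi_{(\mu'\circ c_k,\,\chi)}$, so the hypothesis of the Proposition becomes $\pi_{(\mu,\chi)}\cong\pi_{(\nu,\chi)}$ with $\nu:=\mu'\circ c_k\in\widehat{K_\chi}$, and assertion (2) will follow once we establish: \emph{if $\mu,\nu\in\widehat{K_\chi}$ and $\pi_{(\mu,\chi)}\cong\pi_{(\nu,\chi)}$, then $\mu\cong\nu$.}

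To prove this reduced statement I would invoke Mackey's imprimitivity theorem \cite{Mac3}. First, $G/G_\chi$ identifies with $K/K_\chi$ and, through $hG_\chi\mapsto h\cdot\chi$, with the orbit $\theta=K\cdot\chi$, which is compact — hence closed, and locally closed, in $\widehat A$ — since $K$ is compact. Write $\pi=\pi_{(\mu,\chi)}$. By the SNAG theorem $\pi\res A$ carries a projection-valued measure $P_A$ on $\widehat A$, necessarily supported on $\theta$, and the conjugation formula for elements of $A$ shows that $\pi$ transforms $P_A$ according to the $K$-action on $\theta$; transporting $P_A$ to $G/G_\chi$ through the homeomorphism above yields a projection-valued measure $P$ for which $(\pi,P)$ is a system of imprimitivity for $G$ based on $G/G_\chi$. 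Reading off the action of $A$ in the induced model of $\pi_{(\mu,\chi)}$ shows that $(\pi,P)$ is none other than the canonical system of imprimitivity attached to $\text{ind}_{G_\chi}^G(\mu\otimes\chi)$. The crucial point is that $(\pi,P)$ is built from $\pi\res A$ alone, hence is intrinsic to $\pi$. Therefore any unitary $U$ with $U\pi_{(\mu,\chi)}(g)=\pi_{(\nu,\chi)}(g)U$ for all $g\in G$ intertwines the two restrictions to $A$, hence — by uniqueness of the SNAG measure — the measures $P_A^{(\mu)}$ and $P_A^{(\nu)}$, hence the two canonical systems of imprimitivity; by the imprimitivity theorem $U$ then comes from a unitary equivalence $\mu\otimes\chi\cong\nu\otimes\chi$ of representations of $G_\chi$, and restricting it to $K_\chi$ gives $\mu\cong\nu$.

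The step I expect to need the most care is this last one — matching the SNAG system of imprimitivity of $\pi\res A$ with the canonical system of imprimitivity of the induced representation, and then applying the imprimitivity theorem to recover the inducing representation. The subtlety is that when $K_\chi$ has infinite index in $K$ the orbit $\theta$ carries a non-atomic $K$-invariant measure, so $P_A(\{\chi\})=0$ and one cannot recover $\mu$ naively as a ``$\chi$-isotypic subspace for $A$''; the imprimitivity theorem is exactly what bypasses this difficulty. By contrast, the conjugation computations in (1) and the reduction to $\chi=\chi'$ in (2) are routine manipulations with the group law of $K\ltimes A$.
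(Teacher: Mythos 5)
The paper states this Proposition without proof, quoting it from Mackey's theory (\cite{Mac3}), so there is no in-paper argument to compare against. Your proof is correct and is essentially the standard Mackey-machine argument that the citation points to: the conjugation computation $(k,0_A)(h,a)(k,0_A)\inv=(khk\inv,k\cdot a)$ gives (1) and reduces (2) to the case $\chi=\chi'$, and the uniqueness half of the imprimitivity theorem, applied to the system of imprimitivity canonically recovered from $\pi\res A$ via SNAG on the orbit $\theta\simeq K/K_\chi$ (a homeomorphism because $K$ is compact), recovers $\mu$ up to equivalence. Nothing is missing.
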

\begin{definition}\label{def0.2}
 A cataloguing triple we mean a triple $(\chi,J,\mu)$, where $\chi$ is a character of $A,$ $J$ is the stabilizer $K_\chi$ and $\mu$ is an
 irreducible unitary representation of $K_\chi.$ We denote by $\pi_{\chi,J,\mu}:=\pi_{(\mu,\chi)}$ the induced representation
 $\displaystyle\text{ind}_{_{J\ltimes A}}^G(\mu\otimes\chi).$
\end{definition}

By Baggett in \cite{Bag} (Proposition $2.4$-$D$ $p. 187$), we have
\begin{proposition}\label{prop0.4}
 The mapping $(\chi,K_\chi,\mu)\longrightarrow\pi_{(\mu,\chi)}$ is onto $\widehat G.$
\end{proposition}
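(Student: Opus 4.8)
The plan is to derive this as an immediate consequence of Proposition \ref{prop0.2} together with the irreducibility of each $\pi_{(\mu,\chi)}$ recorded above (which guarantees that the assignment $(\chi,K_\chi,\mu)\mapsto\pi_{(\mu,\chi)}$ actually takes values in $\widehat G$). Given an arbitrary $\pi\in\widehat G$, I would first invoke Proposition \ref{prop0.2}: the restriction $\pi\res A$ is supported by a single $K$-orbit $\theta$ in $\widehat A$. Picking any $\chi\in\theta$, the same proposition produces an irreducible unitary representation $\mu$ of the stabilizer $K_\chi$ with $\pi\simeq\pi_{(\mu,\chi)}$. The cataloguing triple $(\chi,K_\chi,\mu)$ is then sent by the map precisely to $\pi_{(\mu,\chi)}\simeq\pi$, which establishes surjectivity.

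The two points I would check carefully are the well-definedness of the map and the applicability of Proposition \ref{prop0.2}. For well-definedness: since $K_\chi$ fixes $\chi$, the outer tensor product $\si_{(\mu,\chi)}=\mu\otimes\chi$ is a genuine unitary representation of $G_\chi=K_\chi\ltimes A$, and $\pi_{(\mu,\chi)}=\text{ind}_{G_\chi}^G\si_{(\mu,\chi)}$ is irreducible by the Mackey criterion quoted from \cite{Bag}. For the applicability of the Mackey machine, the crucial point is that the $K$-action on $\widehat A$ be regular; this is automatic here because $K$ is compact, so every $K$-orbit in the locally compact Hausdorff space $\widehat A$ is compact, hence closed and locally closed.

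In short I expect no real obstacle: once Proposition \ref{prop0.2} is available the statement is essentially a reformulation of it. A genuinely self-contained proof would instead have to reprove the little-group method for regular semidirect products, but that content is precisely what Propositions \ref{prop0.2} and \ref{prop0.3} (cited from Mackey and Baggett) supply, so I would simply cite them.
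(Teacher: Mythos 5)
Your argument is correct: surjectivity is an immediate consequence of Proposition \ref{prop0.2}, and the paper offers no independent proof of this statement, simply citing Baggett \cite{Bag} (Proposition 2.4-D), whose little-group argument is exactly the one you outline. Your side remarks on well-definedness (irreducibility of each $\pi_{(\mu,\chi)}$) and on the regularity of the $K$-action (orbits of a compact group are compact, hence closed and locally closed) are also sound and match the hypotheses under which the cited Mackey--Baggett machinery applies.
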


For the proof of the following theorem, see $\S\S4.\ 1$-$D$ of \cite{Bag}.
\begin{theorem}
The $C^*$-algebra $C^*(G) $ of the locally compact group $G=K\ltimes A$ is  $CCR $.
\end{theorem}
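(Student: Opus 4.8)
\medskip
\noindent\emph{Proof idea.} The plan is to verify the $CCR$ property directly. Recall that $C^*(G)$ is $CCR$ if and only if $\pi(f)$ is a compact operator for every irreducible unitary representation $\pi$ of $G$ and every $f\in C^*(G)$. Since the compact operators form a norm-closed subspace of $\B(\H_\pi)$ and $C_c(G)$ is dense in $L^1(G)$, hence in $C^*(G)$, it is enough to treat $f\in C_c(G)$; and by Proposition~\ref{prop0.4} every irreducible representation of $G$ is equivalent to one of the form $\pi_{(\mu,\chi)}=\text{ind}_{G_\chi}^G(\mu\otimes\chi)$ with $\chi\in\widehat A$, $K_\chi$ the stabilizer of $\chi$ in $K$ (hence compact) and $\mu\in\widehat{K_\chi}$. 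So it suffices to show that $\pi_{(\mu,\chi)}(f)$ is compact for all such data and all $f\in C_c(G)$.

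The first step is to treat the monomial representation $\delta_\chi=\text{ind}_{\{1_K\}\ltimes A}^G\chi$ on $L^2(K)$, for which the excerpt already records the unitary formula $\delta_\chi(k,a)\varphi(h)=\chi\big((h\inv k)\cdot a\big)\varphi(k\inv h)$. (This formula carries no Radon--Nikodym factor because $G$ is unimodular: the modulus of the $K$-action on $A$ is a continuous homomorphism from the compact group $K$ into $\R_{>0}^{\times}$, hence trivial.) For $F\in C_c(G)$ I would substitute $u=k\inv h$ and apply Fubini to get
\[
\delta_\chi(F)\varphi(h)=\int_K \mathcal K_F(h,u)\,\varphi(u)\,du,
\qquad
\mathcal K_F(h,u)=\int_A F(hu\inv,a)\,\chi(u\inv\cdot a)\,da .
\]
The kernel $\mathcal K_F$ is continuous on $K\times K$ — since $F$ is continuous with compact support in $K\times A$ and the action $K\times A\to A$ is continuous, this is dominated convergence — and because $K$ is compact it then lies in $L^2(K\times K)$. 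Hence $\delta_\chi(F)$ is a Hilbert--Schmidt, hence compact, operator on $L^2(K)$, and by density $\delta_\chi(f)$ is compact for every $f\in C^*(G)$.

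The second step is to pass to the irreducible constituents. By induction in stages through $G_\chi=K_\chi\ltimes A$, the representation $\text{ind}_{\{1_K\}\ltimes A}^{G_\chi}\chi$ on $L^2(K_\chi)$ is — since $K_\chi$ fixes $\chi$ — the representation $(\kappa,a)\mapsto\chi(a)\,\lambda_{K_\chi}(\kappa)$, with $\lambda_{K_\chi}$ the left regular representation of the compact group $K_\chi$; by the Peter--Weyl theorem it therefore contains every $\mu\otimes\chi$ with $\mu\in\widehat{K_\chi}$ as a subrepresentation. Inducing up and using $\text{ind}_{G_\chi}^G\big(\text{ind}_{\{1_K\}\ltimes A}^{G_\chi}\chi\big)=\text{ind}_{\{1_K\}\ltimes A}^G\chi=\delta_\chi$, each $\pi_{(\mu,\chi)}$ is equivalent to a subrepresentation of $\delta_\chi$, carried by a closed $G$-invariant subspace $V\subseteq L^2(K)$. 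For $f\in C^*(G)$ the operator $\delta_\chi(f)$ leaves $V$ invariant, so $\pi_{(\mu,\chi)}(f)=\delta_\chi(f)\res V$ is the restriction of a compact operator to an invariant subspace and hence compact. This shows that every irreducible representation of $G$ is a $CCR$ representation, which is the assertion.

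The step I expect to cost the most is the continuity (equivalently the square-integrability) of the kernel $\mathcal K_F$ in the first step, together with the bookkeeping that makes $\pi_{(\mu,\chi)}$ an honest subrepresentation of $\delta_\chi$ rather than merely weakly contained in it; the remaining arguments are formal. A variant that avoids $\delta_\chi$ altogether is to realize $\pi_{(\mu,\chi)}$ directly on $L^2(K/K_\chi,\H_\mu)$, where $K/K_\chi$ is compact with a $K$-invariant probability measure and $\H_\mu$ is finite-dimensional; the same computation then exhibits $\pi_{(\mu,\chi)}(F)$, for $F\in C_c(G)$, as an integral operator over $K/K_\chi$ with continuous $\B(\H_\mu)$-valued kernel, again Hilbert--Schmidt.
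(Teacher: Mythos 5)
Your argument is correct, and it is worth noting that the paper itself does not prove this theorem at all: it simply cites \S\S 4.1-D of \cite{Bag}. Your proof therefore supplies what the citation outsources, and it does so with exactly the ingredients the paper already has on hand. The two steps are sound: the kernel $\mathcal K_F(h,u)=\int_A F(hu\inv,a)\chi(u\inv\cdot a)\,da$ is indeed continuous on $K\times K$ (compact support of $F$ in the $A$-variable plus joint continuity of the action give uniform convergence), hence Hilbert--Schmidt, and the density argument is legitimate because $\noop{\delta_\chi(f)}\le \no{f}_{C^*(G)}$. The reduction of $\pi_{(\mu,\chi)}$ to an honest subrepresentation of $\delta_\chi$ is also clean: since $K_\chi$ fixes $\chi$, the representation $\text{ind}_{\{1_K\}\ltimes A}^{G_\chi}\chi$ is $\chi\otimes\lambda_{K_\chi}$, Peter--Weyl gives $\mu\otimes\chi$ as a genuine direct summand (not merely a weakly contained constituent), and unitary induction in stages preserves subrepresentations; restricting a compact operator to a closed invariant subspace keeps it compact. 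Your closing ``variant'' is in fact the route closest to the paper's own computations: formula (\ref{calcoppimuchi}) already exhibits $\pi_{(\mu,\chi)}(f)$ as an integral operator on $L^2(K/K_\chi,\mu)$ with the continuous $\B(\H_\mu)$-valued kernel $f_{\mu,\chi}$ of (\ref{fmuchi}), for $f$ in the dense subalgebra $L^1(G)_c$, so compactness drops out immediately there as well. The only hypothesis you are implicitly leaning on is the one the paper also leans on, namely that Proposition \ref{prop0.4} (every irreducible representation is some $\pi_{(\mu,\chi)}$) applies; this is where the compactness of $K$ enters again, since it forces the $K$-orbits in $\widehat A$ to be compact and the Mackey analysis to be exhaustive.
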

\begin{definition}\label{def0.4}$ $
 Let $F$ be the set of all functions $f$ which satisfy:
 \begin{enumerate}
  \item The domain of $f$ is a closed subgroup of $G$.
  \item $f$ is complex-valued and continuous on its domain.

\end{enumerate}
\end{definition}
In the section $3$ of \cite{Fell-2}, Fell has defined a topology on $F$.
\begin{definition}\label{def0.5}
 The Fell topology on $F$ is the topology defined as follows.
 Let $(f_n)$ be a net of elements of $F$. For each $n$, let
 $H_n$ be the domain of $f_n$. Let $f$ be in $F$ and denote by $H$ the domain of $f$.
 Then the net $f_n$ converges to $f$
 in the Fell topology if and only if the following two conditions hold.
 \begin{enumerate}
  \item The net $H_n$ converges to $H$ in $\K(G).$
  \item For each subnet $(f_{n_m})_m$ of the net $(f_n)_n$ and for each net $h_m$ of
  points of $G$ such that for each $m,$ $h_m$ is an
  element of $H_{n_m}$ and such that the net $(h_m)_m$ converge to an element $h$ of $H$,
  the net of complex numbers $(f_{n_m}(h_m))_m$ converges to
  $f(h),$
 \end{enumerate}
  
\end{definition}
In \cite{Fell-2} Fell describes the topology on $\A(G)$ in terms of the elements of
$F$ and in terms of the Fell topology on $F$. Here is one
consequence of that description.
\begin{theorem}\label{the0.1}
 Let $(H_n,S_n)_n$ be a net of elements of $\A(G)$ and $(H,S)$ be an element of $\A(G)$.
 Assume that, for some function of positive type $f$
  associated with $S$, there exists a net $(f_n)_n$ of functions which satisfies:
  \begin{enumerate}
   \item Each $f_n$ is a finite sum of functions of positive type associated with $S_n$.
   \item The net $(f_n)_n$ converges to $f$ in the Fell topology of $F$.
  \end{enumerate}
Then the net $(H_n,S_n)_n$ converges to $(H,S)$ in $\A(G).$
\end{theorem}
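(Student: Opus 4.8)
The plan is to deduce the statement directly from Fell's description of the topology of $\A(G)$ in \cite{Fell-2}. That description provides, for a net $(H_n,S_n)_n$ in $\A(G)$ and $(H,S)\in\A(G)$, a criterion for the convergence $(H_n,S_n)_n\to(H,S)$ expressed precisely in terms of approximating, in the Fell topology of $F$, functions of positive type associated with $S$ by finite sums of functions of positive type associated with the $S_n$. Depending on which exact form of this criterion one extracts from \cite{Fell-2}, the statement is either immediate or needs the short reduction below, whose only purpose is to pass from the single function $f$ supplied by the hypothesis to \emph{all} functions of positive type associated with $S$; this is the step where the irreducibility of $S$ is used, and it is essential (for a reducible $S$ one nonzero function of positive type does not suffice). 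Observe also that $H_n\to H$ in $\K(G)$ is already contained in the hypothesis, being part of the very definition of the convergence $f_n\to f$ in $F$, so only the representation-theoretic half of ``$(H_n,S_n)_n\to(H,S)$ in $\A(G)$'' has to be produced.

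First I would write $f(h)=\sp{S(h)\xi}{\xi}$ for a vector $\xi$ in the Hilbert space $\H_S$ of $S$. Since the hypothesis is vacuous when $f=0$ we may assume $\xi\neq0$; then, $S$ being irreducible, $\xi$ is cyclic, so $\{S(h)\xi:\ h\in H\}$ spans a dense subspace of $\H_S$. Let $g(h)=\sp{S(h)\eta}{\eta}$ be an arbitrary function of positive type associated with $S$ and let $\ve>0$. Choose $h^{(1)},\dots,h^{(k)}\in H$ and scalars $c_1,\dots,c_k$ with $\no{\eta-\sum_{j}c_jS(h^{(j)})\xi}<\ve$ and put
$$g^\ve(h):=\Big\langle S(h)\sum_{j}c_jS(h^{(j)})\xi,\ \sum_{l}c_lS(h^{(l)})\xi\Big\rangle=\sum_{j,l}\ol{c_l}c_j\,f\big((h^{(l)})\inv h\,h^{(j)}\big),\qquad h\in H.$$
Because $S$ is unitary one gets $\sup_{h\in H}\val{g(h)-g^\ve(h)}\le\ve\big(\no{\eta}+\no{\sum_{j}c_jS(h^{(j)})\xi}\big)$, so $g^\ve\to g$ in $F$ as $\ve\to0$. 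The decisive point is that the same recipe fed with $f_n$ stays inside finite sums of functions of positive type associated with $S_n$: write $f_n=\sum_{i}\sp{S_n(\cdot)\xi_n^{(i)}}{\xi_n^{(i)}}$ (a finite sum, by hypothesis), choose, using $H_n\to H$ in $\K(G)$, elements $h_n^{(j)}\in H_n$ with $h_n^{(j)}\to h^{(j)}$, and set $\zeta_n^{(i)}:=\sum_{j}c_jS_n(h_n^{(j)})\xi_n^{(i)}\in\H_{S_n}$. Then, for $h\in H_n$,
$$g_n^\ve(h):=\sum_{j,l}\ol{c_l}c_j\,f_n\big((h_n^{(l)})\inv h\,h_n^{(j)}\big)=\sum_{i}\big\langle S_n(h)\zeta_n^{(i)},\ \zeta_n^{(i)}\big\rangle,$$
which exhibits $g_n^\ve$ as a finite sum of functions of positive type associated with $S_n$.

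Next I would verify that $g_n^\ve\to g^\ve$ in the Fell topology of $F$ for each fixed $\ve$: the domains converge by hypothesis, and for any subnet $(g_{n_m}^\ve)_m$ and any net $h_m\in H_{n_m}$ with $h_m\to h\in H$, continuity of the group operations gives $(h_{n_m}^{(l)})\inv h_m\,h_{n_m}^{(j)}\to(h^{(l)})\inv h\,h^{(j)}$, hence $f_{n_m}\big((h_{n_m}^{(l)})\inv h_m\,h_{n_m}^{(j)}\big)\to f\big((h^{(l)})\inv h\,h^{(j)}\big)$ since $f_n\to f$ in $F$; summing the finitely many terms yields $g_{n_m}^\ve(h_m)\to g^\ve(h)$. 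Finally I would combine $g_n^\ve\to g^\ve$ (as $n$ varies, for fixed $\ve$) with $g^\ve\to g$ (as $\ve\to0$) into a single net $(g_n)_n$ indexed by the original directed set, consisting of finite sums of functions of positive type associated with $S_n$ and converging to $g$ in $F$ --- the standard iterated-limit / diagonalization for nets, the norm-uniform bound on $\val{g-g^\ve}$ providing the uniformity that makes a choice $\ve=\ve(n)$ legitimate. Since $g$ was arbitrary, Fell's criterion from \cite{Fell-2} then delivers $(H_n,S_n)_n\to(H,S)$ in $\A(G)$. I expect the main obstacle to be bookkeeping rather than mathematics: pinning down the precise form of Fell's criterion in \cite{Fell-2} (in particular whether it already permits a single cyclic function of positive type, in which case the reduction above is superfluous) and carrying out the net-theoretic diagonalization carefully; the algebraic heart --- that a two-sided combination of translates of a finite sum of functions of positive type associated with $S_n$ is again such a finite sum --- is the one-line identity displayed above.
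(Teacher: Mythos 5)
Your proposal is correct, but there is nothing in the paper to compare it against: the paper does not prove this theorem, quoting it from Fell \cite{Fell-2} as a stated consequence of his description of the topology on $\A(G)$. What you supply --- the reduction, via cyclicity of the vector realizing $f$ under the irreducible $S$, from approximating a single nonzero function of positive type to approximating all of them, with the two-sided translation identity $\sum_{j,l}\overline{c_l}c_j f_n((h_n^{(l)})^{-1}hh_n^{(j)})=\sum_i\langle S_n(h)\zeta_n^{(i)},\zeta_n^{(i)}\rangle$ keeping the approximants among finite sums of positive-type functions associated with $S_n$ --- is precisely the standard argument by which this sufficient criterion is extracted from Fell's characterization, and it is sound; the net-theoretic diagonalization you defer is routine once one uses Fell's criterion in its compact-set-and-$\varepsilon$ form. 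The one caveat is conventional rather than mathematical: the statement tacitly requires $f\neq 0$ (otherwise the hypothesis is trivially satisfiable and the conclusion false), which you handle appropriately.
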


The topology of the dual space of the group $G$ has been described  by Baggett in \cite{Bag} in the following theorem:

\begin{theorem}\label{the0.2}
 The topology of $\widehat G$ may be described as follows: Let $B$ be a subset of $\widehat G$ and $\pi$ an element
 of $\widehat G$. $\pi$ is contained
 in the closure of $B$ if and only if there exist: a cataloguing triple $(\chi,(K_\chi,\mu))$ for $\pi$,
 an element  $(H,S)$
 of $\A(K)$ and a  net $(\chi_n,(K_{\chi_n},\mu_n))_n$ of cataloguing triples, such that:
 \begin{enumerate}
  \item For each $n$, the element $\pi_{(\mu_n,\chi_n)}$ of $\widehat G$ is an element of $B.$
  \item The net $(\chi_n,(K_{\chi_n},\mu_n))$ converges to $(\chi,(K',\mu'))$ in $\widehat A\times\A(K).$
  \item $K_\chi$ contains $K'$, and $\text{ind}_{K'}^{K_\chi} \mu'$ contains $\mu$.
 \end{enumerate}

\end{theorem}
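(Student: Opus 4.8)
The plan is to derive this statement from Mackey's parametrization of $\widehat G$ by cataloguing triples (Propositions \ref{prop0.2}--\ref{prop0.4}) together with Fell's results on the Fell topology on $\A(G)$ and on the continuity of the inducing map (Theorem \ref{the0.1} and \cite{Fell-2}). Recall that for any $C^*$-algebra an irreducible $\pi$ lies in the closure of a set $B$ of irreducibles exactly when $\pi$ is weakly contained in $B$, written $\pi\prec B$ (the kernel of $\pi$ contains the intersection of the kernels of the members of $B$); since $G$ is $CCR$, moreover, $\widehat G$ is $T_1$ and two irreducibles with the same kernel coincide. I would use throughout two preliminary remarks. \emph{(i) Induction in stages.} If $K'$ is a closed subgroup of $K_\chi$ then $K'$ fixes $\chi$, the tensor $\mu'\otimes\chi$ is a representation of $K'\ltimes A$, and a direct coset computation with the group law of $G$ gives $\text{ind}_{K'\ltimes A}^{K_\chi\ltimes A}(\mu'\otimes\chi)\cong(\text{ind}_{K'}^{K_\chi}\mu')\otimes\chi$; hence by induction in stages $\text{ind}_{K'\ltimes A}^{G}(\mu'\otimes\chi)\cong\text{ind}_{K_\chi\ltimes A}^{G}\big((\text{ind}_{K'}^{K_\chi}\mu')\otimes\chi\big)$, so whenever $\mu$ occurs in $\text{ind}_{K'}^{K_\chi}\mu'$ the representation $\pi_{(\mu,\chi)}$ is a subrepresentation of $\text{ind}_{K'\ltimes A}^{G}(\mu'\otimes\chi)$. \emph{(ii) Transporting Fell convergence.} If $\chi_n\to\chi$ in $\widehat A$ and $(K_{\chi_n},\mu_n)\to(K',\mu')$ in $\A(K)$ with $K'\subseteq K_\chi$, then $(K_{\chi_n}\ltimes A,\,\mu_n\otimes\chi_n)\to(K'\ltimes A,\,\mu'\otimes\chi)$ in $\A(G)$: take a positive-type function $\psi$ of $\mu'$, use Fell's description of $\A(K)$ to obtain a net $(\psi_n)$ of finite sums of positive-type functions of $\mu_n$ with $\psi_n\to\psi$ in the Fell topology of $F$, and observe that the functions $(k,a)\mapsto\psi_n(k)\chi_n(a)$ on $K_{\chi_n}\ltimes A$ are finite sums of positive-type functions of $\mu_n\otimes\chi_n$ whose domains converge to $K'\ltimes A$ by Proposition \ref{prop0.1} and which converge along nets to $(k,a)\mapsto\psi(k)\chi(a)$ because $\chi_n\to\chi$ uniformly on compacta; Theorem \ref{the0.1} then applies.

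For the sufficiency of (1)--(3): assume cataloguing triples $(\chi_n,(K_{\chi_n},\mu_n))$ with $\pi_{(\mu_n,\chi_n)}\in B$ converge to $(\chi,(K',\mu'))$ with $K'\subseteq K_\chi$ and $\mu$ occurring in $\text{ind}_{K'}^{K_\chi}\mu'$. By (ii) the pairs $(K_{\chi_n}\ltimes A,\mu_n\otimes\chi_n)$ converge in $\A(G)$ to $(K'\ltimes A,\mu'\otimes\chi)$, so by Fell's continuity of induction \cite{Fell-2} one gets $\text{ind}_{K'\ltimes A}^{G}(\mu'\otimes\chi)\prec\{\text{ind}_{K_{\chi_n}\ltimes A}^{G}(\mu_n\otimes\chi_n)\}=\{\pi_{(\mu_n,\chi_n)}\}\subseteq B$. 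By (i), $\pi_{(\mu,\chi)}$ is a subrepresentation of $\text{ind}_{K'\ltimes A}^{G}(\mu'\otimes\chi)$, whence $\pi_{(\mu,\chi)}\prec B$, i.e. $\pi_{(\mu,\chi)}\in\overline B$.

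For the necessity: let $\pi=\pi_{(\mu,\chi)}\in\overline B$ and pick a net $\pi_n\in B$ with $\pi_n\to\pi$, writing $\pi_n=\pi_{(\mu_n,\chi_n)}$ via Proposition \ref{prop0.4}. Since restriction to $A$ is continuous for weak containment, the support $K\cdot\chi$ of $\pi|_A$ lies in the closure of $\bigcup_n K\cdot\chi_n$; replacing each $\chi_n$ by a suitable $K$-translate (which only conjugates the triple, by Proposition \ref{prop0.3}, leaving $\pi_n$ unchanged) and passing to a subnet, compactness of $K$ lets me assume $\chi_n\to\chi$. Because $\pi_n|_K\cong\text{ind}_{K_{\chi_n}}^K\mu_n$ and $\pi|_K\cong\text{ind}_{K_\chi}^K\mu$, weak containment of the finitely many $K$-types of $\pi|_K$ in $\{\pi_n|_K\}$ together with Frobenius reciprocity bound the dimension of $\mu_n$ along a subnet, so---passing to that subnet---I may assume $(K_{\chi_n},\mu_n)\to(K',\mu')$ in $\A(K)$, and continuity of the $K$-action on $\widehat A$ forces $K'\subseteq K_\chi$, giving the group containment in (3). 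It remains to show that $\mu$ occurs in $\text{ind}_{K'}^{K_\chi}\mu'$. For this I would invoke Fell's positive-type-function criterion in the direction converse to Theorem \ref{the0.1} (from \cite{Fell-2}): a positive-type function of $\mu$ on $K_\chi$, pushed through $\mu\otimes\chi$ to $K_\chi\ltimes A$ and induced to $G$, is a positive-type function of $\pi$; since $\pi_n\to\pi$ in $\widehat G$, it is the limit, uniformly on compacta, of finite sums of positive-type functions of the $\pi_n$; unwinding these through the induction defining $\pi_n=\pi_{(\mu_n,\chi_n)}$ and letting $\chi_n\to\chi$ and $K_{\chi_n}\to K'$, one recovers that a positive-type function of the restriction $\mu|_{K'}$ is a limit of finite sums of positive-type functions of the $\mu_n$ over the subgroups $K_{\chi_n}\to K'$; this is exactly the statement that $\mu$ is weakly contained in $\text{ind}_{K'}^{K_\chi}\mu'$, hence---$K_\chi$ being compact---that $\mu$ occurs in it. With $\mu$ so produced, $(\chi,(K_\chi,\mu))$ catalogues $\pi$ and (1)--(3) hold.

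The main obstacle is this last step of the necessity argument: the bookkeeping needed to carry a function of positive type back and forth between $K_\chi$, $G_\chi=K_\chi\ltimes A$ and $G$ through induction, and to pass to the limit when the little groups $K_{\chi_n}$ do not contain $K_\chi$ (indeed typically shrink to a proper subgroup $K'$), so as to extract precisely the containment ``$\text{ind}_{K'}^{K_\chi}\mu'$ contains $\mu$''. This merging of Mackey's little-group method with Fell's continuity apparatus for $\A(G)$ is exactly the content of Baggett's proof in \cite{Bag}.
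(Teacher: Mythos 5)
The paper offers no proof of this statement: it is quoted from Baggett with the citation \cite{Bag} and nothing more, so there is no in-paper argument to compare yours against. Judged on its own terms, your outline of the sufficiency of (1)--(3) is a sound skeleton: induction in stages reduces $\pi_{(\mu,\chi)}$ to a subrepresentation of $\text{ind}_{K'\ltimes A}^{G}(\mu'\otimes\chi)$, your remark (ii) transporting Fell convergence from $\widehat A\times\A(K)$ to $\A(G)$ via Proposition \ref{prop0.1} and Theorem \ref{the0.1} is the right mechanism, and Fell's continuity of the inducing map then yields the weak containment. Since $G$ is CCR, closure equals weak containment, so that half is essentially complete modulo routine verifications.

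The necessity direction, however, has a genuine gap at exactly the point you flag, and the sentence intended to close it argues in the wrong logical direction. Granting that after $K$-translation and passage to subnets one has $\chi_n\to\chi$, $(K_{\chi_n},\mu_n)\to(K',\mu')$ in $\A(K)$ and $K'\subseteq K_\chi$, your proposed conclusion --- that a positive-type function of $\mu\vert_{K'}$ is a limit of finite sums of positive-type functions of the $\mu_n$ --- would show that the constituents of $\mu\vert_{K'}$ occur among the limit points of the net $(K_{\chi_n},\mu_n)$ in the non-Hausdorff space $\A(K)$; it does not show that the \emph{particular} limit $\mu'$ fixed in condition (2) is a constituent of $\mu\vert_{K'}$, which is what $\mu\subseteq\text{ind}_{K'}^{K_\chi}\mu'$ demands via Frobenius reciprocity. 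Pinning down $\mu'$ requires an additional device such as the convergence of the characters $c_{\mu_n}\to c_{\mu'}$ in the Fell topology together with orthogonality relations --- precisely the mechanism the paper itself deploys in the unnumbered key lemma following this theorem (the one preparing Definition \ref{def0.3.2}), citing 7.1-B of \cite{Bag}. Two smaller looseness points: $\pi\vert_K=\text{ind}_{K_\chi}^K\mu$ generally has infinitely many $K$-types, so the dimension bound on $\mu_n$ should come from one $K$-type $\rho$ of $\pi$ occurring cofinally in $\pi_n\vert_K$ (giving $\dim\mu_n\le\dim\rho$), followed by a compactness argument for bounded-dimension elements of $\A(K)$ to extract the convergent subnet; and the converse direction of Fell's positive-type criterion that you invoke is itself a nontrivial theorem. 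In short, your plan reconstructs Baggett's strategy correctly, but the crux is deferred to \cite{Bag} rather than proved.
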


The following lemma is the key for Definition \ref{def0.3.2}.
\begin{lemma}
  Let $(\chi_n,(K_{\chi_n},\mu_n))_n$ be a properly converging net with the  limit  $(\chi,(K',\mu'))$ (i.e $K'$ is a closed subgroup of $K_\chi$ and
 $\mu'\in\widehat {K'}$).
 Then for some subnet we have that $$\dim{\mu_k}=d_{\mu_k}=d_{\mu'}=\dim{\mu'} \text{ for all }k \text{ in the subnet}.$$
\end{lemma}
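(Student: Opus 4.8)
The plan is to prove the stronger statement that $d_{\mu_n}\to d_{\mu'}$ along the whole net; since $d_{\mu_n}\in\N$, this forces $d_{\mu_n}=d_{\mu'}$ for all $n$ beyond some index, and restricting the net to that tail yields the subnet in the statement. Write $H_n:=K_{\chi_n}$, $H:=K'$, and let $m_{H_n},m_H$ denote the normalized Haar measures; recall that, $K$ being compact, $\mu_n\in\widehat{H_n}$ and $\mu'\in\widehat{H}$ are finite-dimensional. The starting point is the elementary remark that a finite sum of functions of positive type associated with an irreducible representation $S$ on the $d_S$-dimensional space $\H_S$ is exactly a function of the form $x\mapsto\tr(S(x)T)$ with $0\le T\in\B(\H_S)$; such a function takes the value $\tr T$ at $e$, lies in the $S$-isotypic component of $L^2$ of its domain, and, by the Schur orthogonality relations, has squared $L^2$-norm equal to $\tr(T^2)/d_S$.

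Two analytic facts about the convergence will be needed. First, since $H_n\to H$ in $\K(K)$ and $K$ is compact, the measures $m_{H_n}$ converge weak-$*$ to $m_H$: any weak-$*$ cluster point of $(m_{H_n})_n$ is a probability measure supported on $H$ (because $\limsup_n H_n\subseteq H$) and left-invariant under $H$ (because $H\subseteq\liminf_n H_n$), hence equals $m_H$ by uniqueness of Haar measure. Second, if $(g_n)_n$ is a net in $F$ whose domains $H_n$ converge to $H$ and with $g_n\to g$ in the Fell topology of $F$, then $(g_n)_n$ is eventually uniformly bounded -- otherwise a subnet would produce points $h_n\in H_n$ with $|g_n(h_n)|\to\infty$ sub-converging to a point of $H$, contradicting the continuous convergence in Definition~\ref{def0.5} -- and moreover $\int_{H_n}|g_n|^2\,dm_{H_n}\to\int_H|g|^2\,dm_H$. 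I would deduce the latter from the weak-$*$ convergence of the Haar measures together with the continuous convergence of $g_n$ to $g$: cover the compact set $H$ by finitely many open subsets of $K$ on each of which $g_n$ is, for large $n$ and on $H_n$, within $\varepsilon$ of a constant, glue these estimates by a subordinate partition of unity, and let $\varepsilon\to0$.

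Granting this, I apply Fell's description of the topology of $\A(K)$ (the converse of Theorem~\ref{the0.1}, with $K$ in place of $G$; see \cite{Fell-2}): for every function of positive type $f$ associated with $\mu'$ there is a net $(f_n)_n$ over the same index set, with $f_n$ a finite sum of functions of positive type associated with $\mu_n$ and $f_n\to f$ in $F$. Applying this to the character $f=\chi_{\mu'}:=\tr\circ\mu'$ produces operators $0\le T_n\in\B(\H_{\mu_n})$ with $f_n=\tr(\mu_n(\cdot)T_n)$, and then $\tr T_n=f_n(e)\to\chi_{\mu'}(e)=d_{\mu'}$ while $\tr(T_n^2)/d_{\mu_n}=\|f_n\|_{L^2(H_n)}^2\to\|\chi_{\mu'}\|_{L^2(H)}^2=1$. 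Since $\tr(T_n^2)\ge(\tr T_n)^2/d_{\mu_n}$ by Cauchy--Schwarz, this gives $1\ge d_{\mu'}^2/\liminf_n d_{\mu_n}^2$, i.e. $\liminf_n d_{\mu_n}\ge d_{\mu'}$. Applying it instead to a diagonal matrix coefficient $f(h)=\langle\mu'(h)\xi,\xi\rangle$ with $\|\xi\|=1$, for which $f(e)=1$ and $\|f\|_{L^2(H)}^2=1/d_{\mu'}$, produces $0\le S_n\in\B(\H_{\mu_n})$ with $\tr S_n\to1$ and $\tr(S_n^2)/d_{\mu_n}\to1/d_{\mu'}$; since $\tr(S_n^2)\le(\tr S_n)^2$ for $S_n\ge0$, we obtain $1/d_{\mu'}\le1/\limsup_n d_{\mu_n}$, i.e. $\limsup_n d_{\mu_n}\le d_{\mu'}$. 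Combining the two inequalities gives $d_{\mu_n}\to d_{\mu'}$, and we conclude as indicated above.

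The algebraic identities (Schur orthogonality, the two trace inequalities, the normal form for finite sums of positive-type functions) are routine; the part requiring genuine care is the second analytic fact -- promoting the ``graph/continuous convergence'' of the positive-type functions $f_n$, which live on the moving domains $H_n$, to convergence of their $L^2(H_n)$-norms computed against the moving Haar measures $m_{H_n}$. This is exactly where compactness of $K$ enters, both to make the $f_n$ uniformly bounded and to make $m_{H_n}\to m_H$ weak-$*$, and it is the step I expect to be the main obstacle to a fully detailed write-up.
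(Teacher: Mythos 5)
Your proof is correct in outline and in fact reaches a slightly stronger conclusion than the lemma, but it follows a genuinely different route from the paper's. The paper fixes $\rho\in\widehat K$ whose restriction to $K_\chi$ contains a $\mu$ with $\mu_{\vert K'}\ni\mu'$, realizes the spaces $\H^{\mu_n}$ (for a subnet, by Baggett's Theorem 4.2-D) inside the single finite-dimensional bi-invariant subspace $L^2(K)^\rho$, so that the $d_{\mu_n}$ are bounded by $d_\rho^2$ and a pigeonhole/subnet extraction makes them constant equal to some $d$; it then uses the convergence of the characters $c_{\mu_n}\to c_{\mu'}$ in the Fell topology (Baggett, 7.1-B) together with limits of orthonormal bases and of the rank-one operators $\mu_n(c^n_{i,j})$ to identify the limit space as an irreducible $K'$-module equivalent to $\mu'$, forcing $d=d_{\mu'}$ (this extra structure — the common space, the bases $\xi_j^n\to\xi_j$ — is reused later in the paper, which is why its proof does more than the bare statement requires). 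You instead stay entirely inside Fell's positive-definite-function picture of convergence in $\A(K)$: evaluating the approximating finite sums at $1_K$ and computing their $L^2(K_{\chi_n})$-norms by Schur orthogonality gives the two trace inequalities and hence $\liminf_n d_{\mu_n}\ge d_{\mu'}\ge\limsup_n d_{\mu_n}$, so $d_{\mu_n}=d_{\mu'}$ eventually along the \emph{whole} net, with no subnet needed for the dimension claim. Three remarks. First, you need the converse half of Fell's characterization (convergence in $\A(K)$ implies every positive-type function of the limit is a Fell-limit of finite sums of positive-type functions associated with the $\mu_n$); the paper quotes only the forward half as Theorem \ref{the0.1}, but the converse is indeed in \cite{Fell-2}, so the citation is legitimate — just make it explicit, and note that to approximate the character $\mathrm{tr}\circ\mu'$ (a sum of $d_{\mu'}$ positive-type functions, not a single one) you should apply the statement to each diagonal coefficient and add. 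Second, your two analytic lemmas (weak-$*$ convergence $m_{K_{\chi_n}}\to m_{K'}$ of normalized Haar measures, and convergence of the $L^2$-norms over the moving subgroups) are correct; a cleaner route to the second is to extend $g$ continuously to all of $K$ by Tietze, prove $\sup_{H_n}\vert g_n-\tilde g\vert\to 0$ by the same compactness/subnet argument you use for uniform boundedness, and then apply the weak-$*$ convergence to the continuous function $\vert\tilde g\vert^2$. Third, Baggett's Lemma 7.1-B, which the paper's own proof cites, already asserts that $c_{\mu_n}\to c_{\mu'}$ in the Fell topology on $F$; evaluating at $1_K$ then gives $d_{\mu_n}\to d_{\mu'}$ in one line, so your $\liminf$/$\limsup$ machinery, while pleasantly self-contained, re-derives a special case of a lemma available off the shelf.
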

\begin{proof}
The limit set $L $ of the net $(\pi_k)_k $ in $\wh{  G} $  is according to \cite{Bag} the set
\begin{eqnarray*}\label{}
 \nn L=\{\pi_{(\nu,\chi)}; \nu\in\widehat{K_\chi}, \nu\res{  K'}\ni \mu'\}.
 \end{eqnarray*}
 We can as in \cite{Bag} realize all these  representations $\pi_{(\mu,\chi)} $ on subspaces in the common Hilbert space $L^2(K) $.
Take some $\mu\in\widehat {K_\chi} $   such that $\mu\res{  K'} $ contains $\mu' $.
 Choose a $\rh\in\widehat K $, such that $\rh\res {K_\chi} $ contains $\mu $.
 Let $L^2(K)^\rh$ be the minimal  left and right translation $K $-invariant subspace of $L^2(K) $
 containing $\H_\rh $ and thus also a copy of the Hilbert space $\H_\mu $ of the representation $\mu $ of $K_\chi $.
 Then $\L2{K}^\rh $ has dimension $d_\rh^2 $.

  Then $\L2K^\rh $ also contains a copy $(\H_{\mu'},\mu') $ of the irreducible representation
 $\mu' $ of $K' $. Let $\H^{\mu'} $ be the $\mu' $-isotopic component inside ${\H^\rh}$. We write
 \begin{eqnarray*}\label{}
 \nn \L2K^\rh&= &
  \H^{\mu'}\oplus \H^\mu_0,
 \end{eqnarray*}
where $\H^\rh_0:= (\H^{\mu'})^\perp \subset \L2K^\rh $. There exists $l_\mu\in \N_{>0} $ such that   $\H^{\mu'}\simeq l_\mu \H_{\mu'} $.
Similarly we have $\L2K^\rh=\H^{\mu_n}\oplus \H^{\mu_n}_0 $, $\mu\res{K_{\chi_n}}\simeq l_n \mu_n $, $\H^{\mu_n}\simeq l_n \H_{\mu_n} $
for some $l_n\in\N $. Of course $l_n\leq \dim{\L2K^\mu } $ and $l_n $ can  be $0$.

However, since the representations $(\pi_{(\mu_n,\ch_n)}) $ converge to the representation $\pi_{(\mu,\ch)} $, we can assume   that for a subnet the subspaces $\H^{\mu_n} $ can  be realized inside  $L^2(K)^\rh$ for $n $ large enough (see \cite{Bag}, 4.2-D Theorem).
This tells us also that  $l_n>0 $ for $n $ large  enough and hence, again for a subnet, we can suppose that $l_n=l $ is fixed for every $n $.
Since the dimensions of the spaces $\H^{\mu_n} $ are smaller than the dimension of $\L2K^\rh$, we can also assume that all the dimensions $d_{\mu_n} $ are the same and equal to some common $d>0 $.

We choose for every $n $ an orhonormal basis $(\xi^n_j)_j $ of $\L2K^\rh$,
which passes through the $l_n $ copies of $\H^{\mu_n} $ and through $\H^{\mu_n}_0 $.
We choose the $\xi^n_j $ such that $\text{span}\{ \xi^n_1,\cdots, \xi^n_{d_{\mu_n}}\} $ is  a copy of the space $\H_{\mu_n} $.
Since the dimension of $\L2K^\rh$ is finite, we can assume (passing to a subnet)  that $\limn \xi_j^n=\xi_j $ exists in $\L2K $
for every $j \leq ld$.
Let $c_n, n\in N, $ be the character of the irreducible representation $\mu_n $ of $K' $.
Then
  $c_n\ast \xi_n=\xi_n $ for any $\xi_n\in \H^{\mu_n} $ and $c_n\ast H^{\mu_n}_0=\{ 0\} $ for every $n $.
 According to Bagget, the pairs $(K_{\chi_n},c_{n}) $ converge in $\A(K) $ to the pair $(K',c_{\mu'}) $,
 where $c_\nu $ denotes  the character of an irreducible representation $\nu $ (see \cite{Bag}, 7.1-B Lemma). This implies  that (see \cite{Bag},1.4-A Proposition)
\begin{eqnarray*}\label{}
 \nn \xi_j&:=&\limn \xi^n_j\\
 \nn  &= &
 \limn {c_n} \ast \xi_j^n\\
 \nn  &= &
c_{\mu'}\ast \ch_j
 \end{eqnarray*}
for every $j$ and similarly
\begin{eqnarray*}\label{}
 \nn c_{\mu'}\ast \H^\mu_0 &= &
 \{ 0\}.
 \end{eqnarray*}
 This shows that $K' $ acts on $\H^{\mu'}=\limn \H^{\mu_n} $ by a multiple of $\mu' $.
Define for any   $n $ and $i,j \in \{ 1,\cdots, d\}$ the function $c^n_{i,j} $ on $K $ by
\begin{eqnarray*}\label{}
 \nn c^n_{i,j}(k) &:= &
 \frac{1}{d}\langle \la(k)\xi^n_j,\xi^n_i\rangle_{\L2K}, k\in K,
 \end{eqnarray*}
where $\la $ denotes the left regular representation of $K $ on $L^2(K) $. Since $\limn \xi^n_i=\xi_i $ in $\L2K $ for every $i $,
the functions $c^n_{i,j} $ converge uniformly on $K $ to
the function $c_{i,j}$, where
\begin{eqnarray*}\label{}
 \nn c_{i,j}(k)&:=&\frac{1}{d}\langle \la(k)\xi_j,\xi_i\rangle_{\L2K}.
 \end{eqnarray*}
Now  the operator ${\mu_n(c^n_{i,j}}\res{  K_{\ch_n}}) $,
which acts on the Hilbert space $\H_{\mu_n}:=\text{span}\{ \xi^n_j, 1\leq j\leq d,\} $ is the rank one operator
$P_{\xi^n_i,\xi^n_j} $ which converges  to the operator $P_{\xi_i,\xi_j} $ on the Hilbert space $\H_{\mu'}:=\limn \H_{\mu_n} $. This shows that the restriction of $\mu'$ to $\H_{\mu'} $ is irreducible. Hence $d=\dim {\H_{\mu'}} $ and $l_{\mu'}=l $.
\end{proof}

\subsection{}\label{CCR}
\begin{remark}\label{operator convergence for the mun}
\rm   Identifying for every $n $ the Hilbert space $\H_{\mu_n} $ with $\C^d $ via the basis given by the $\xi_j $'s,
we see also that for every $f\in C(K) $ the operators $\mu_n(f\res{  K_{\ch_n}})  $ converge strongly and hence in operator norm to
the operator $\mu'(f\res{  K'}) $.
 \end{remark}

We have by  $\S\S 4.5$ of \cite{Dix}:

\begin{theorem}
 Let $A$ be a postliminal $C^*$-algebra. Then $A$ admits a
composition net $(I_n)_{0\leq n\leq\al}$ such that 
for any $n $,  which is not an ordinal,  the quotient $I_{n+1}/I_{n} $ is with continuous trace and such that
for every ordinal $\be\leq \al $ the relation  $I_\be=\bigcup_{n<\be}I_n $ holds.  
\end{theorem}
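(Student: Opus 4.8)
The plan is a transfinite recursion whose single substantial ingredient I would \emph{quote} rather than reprove: the structural fact from \cite{Dix}, \S\S 4.4--4.5, that \emph{every nonzero postliminal (GCR) $C^*$-algebra $B$ admits a nonzero closed two-sided ideal which is a $C^*$-algebra with continuous trace} --- equivalently, the set of $\pi\in\wh B$ admitting a neighbourhood on which $B$ ``has continuous trace'' is dense and open in $\wh B$ --- and that one may even take this ideal to be the largest continuous-trace ideal of $B$. Apart from this I would use only the elementary facts that closed ideals of, and quotients of, postliminal algebras are again postliminal, and that the closed two-sided ideals of $A$ form a \emph{set} (they correspond bijectively to the open subsets of $\mathrm{Prim}(A)$, so there are at most $2^{|A|}$ of them).

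The construction proceeds as follows. Put $I_0:=\{0\}$. Suppose $\rho$ is an ordinal for which a closed ideal $I_\rho$ of $A$ has been produced. If $I_\rho=A$, stop and set $\al:=\rho$. Otherwise $A/I_\rho$ is a nonzero postliminal $C^*$-algebra, hence by the quoted fact it contains a nonzero closed ideal $\ca J$ with continuous trace; let $I_{\rho+1}$ be the pre-image of $\ca J$ under the canonical surjection $A\to A/I_\rho$. Then $I_\rho\subsetneq I_{\rho+1}$, $I_{\rho+1}$ is a closed two-sided ideal of $A$, and $I_{\rho+1}/I_\rho\cong\ca J$ has continuous trace. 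If $\la$ is a limit ordinal for which all $I_\rho$, $\rho<\la$, have been defined, set $I_\la:=\overline{\bigcup_{\rho<\la}I_\rho}$; this is a closed two-sided ideal of $A$ containing every $I_\rho$ with $\rho<\la$, and it is by construction the closure of $\bigcup_{\rho<\la}I_\rho$, which is the relation required at limit stages.

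It remains to check that the recursion halts. As long as $I_\rho\neq A$ the successor step yields $I_\rho\subsetneq I_{\rho+1}$; so if the recursion never reached $A$ it would define a strictly increasing family $(I_\rho)_\rho$ indexed by \emph{all} ordinals, that is, an injection of the proper class of ordinals into the set of closed two-sided ideals of $A$ --- which is absurd. Hence there is a least ordinal $\al$ with $I_\al=A$, and $(I_\rho)_{0\le\rho\le\al}$ is the desired composition net: $I_0=\{0\}$, $I_\al=A$, the net is increasing, $I_{\rho+1}/I_\rho$ has continuous trace for every $\rho<\al$ (in particular for every index that is not a limit ordinal), and $I_\la=\overline{\bigcup_{\rho<\la}I_\rho}$ at every limit ordinal $\la\le\al$. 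I expect the bookkeeping above to be routine; the one genuinely nontrivial point --- and what I would regard as the main obstacle if the argument had to be carried out in full --- is the quoted existence of a nonzero continuous-trace ideal inside an arbitrary nonzero postliminal algebra, which rests on the analysis of abelian elements and of the local structure of the spectrum developed in \S 4.4--4.5 of \cite{Dix}.
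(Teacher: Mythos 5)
Your proof is correct and is precisely the standard transfinite-recursion argument of Dixmier, \S\S 4.4--4.5 (quote the existence of a nonzero continuous-trace closed ideal in any nonzero postliminal algebra, pull back along $A\to A/I_\rho$ at successor stages, take the closure of the union at limit stages, and terminate because the closed two-sided ideals of $A$ form a set); the paper offers no proof of its own and simply cites this theorem from \cite{Dix}. Your insertion of the closure at limit ordinals is the correct reading of the (slightly garbled) statement, since for an increasing family of closed ideals the plain union need not be closed, and Dixmier's definition of a composition series requires $I_\la=\overline{\bigcup_{\rho<\la}I_\rho}$.
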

We take now as $C^* $-algebra our $A=C^*(G) $, which is $CCR $.
 
Let  
\begin{eqnarray*}\label{}
 \nn S_n&:= &
 \{ \pi\in\widehat G\vert \pi(I_n)=\{ 0\}\}, 0\leq n\leq \al.
 \end{eqnarray*}
Then $S_0=\widehat G $ and $S_\al=\{ \ey\} $. The subsets
\begin{eqnarray*}\label{}
 \nn \GA_n &:= &
 S_{n}\setminus S_{n+1}, 0\leq n\leq\al, n\text{ not an ordinal},
 \end{eqnarray*}
are locally compact and Hausdorff in their relative topologies, since $\GA_n $ is the spectrum of the algebra $I_{n+1}/I_{n} $,
which is of continuous trace (see \cite{Dix}).
Let  $S=\widehat{G} $ be  the spectrum of $G $. Then
\begin{eqnarray*}\label{}
 \nn S &= &
 \bigcup_{0\leq n\leq\al} S_n\\
 \nn  &= &
\dot\bigcup_{0\leq n\leq\al} \GA_n.
 \end{eqnarray*}

\subsection{The Fourier transform.}
Let us first write down explicitly the representation $\pi_{(\mu,\chi)}$. Its Hilbert space $\H_{{(\mu,\chi)}}$ can be identified
with the space
\begin{eqnarray*}
 L^2(G/K_\chi\ltimes A,\si_{(\mu,\chi)})\simeq L^2(K/K_\chi,\mu)\subset L^2(K,\H_\mu).
\end{eqnarray*}
Let $\xi$ be an element of $\H_{{(\mu,\chi)}}.$ For all $a\in A$ and $k,h\in K$ we use the same calculation as in (\ref{calcdelta})  and we have that
\begin{eqnarray*}
 \pi_{(\mu,\chi)}(k,a)\xi(h)&=&\chi(1_K,h^{-1}\cdot a)\xi(k^{-1}h)\\
 &=:&\chi(h^{-1}\cdot a)\xi(k^{-1}h).
\end{eqnarray*}
Let us compute for $f\in L^1(G)$ the operator $\pi_{(\mu,\chi)}(f).$ We have for $h\in K$ and $\xi\in\H_{{(\mu,\chi)}}$ that
\begin{eqnarray}\label{calcoppimuchi}
 \nn\pi_{(\mu,\chi)}(f)\xi(h)&=&\int_Gf(k,a)\pi_{(\mu,\chi)}(k,a)\xi(h)dadk\\
 \nn&=& \int_K\int_Af(k,a)\chi(h^{-1}\cdot a)\xi(k^{-1}h)dadk\\
  \nn&=& \int_K\int_Af(hk^{-1},a)\chi(h^{-1}\cdot a)\xi(k)dadk\\
 \nn&=&\int_{K/K_\chi}\int_{K_\chi}(\int_Af(hs^{-1}k^{-1},h\cdot a)\chi( a)da)\mu(s^{-1})\xi(k)ds dk\\
 \nn&=&\int_{K/K_\chi}\int_{K_\chi}\widehat f^2(hs k^{-1},h\cdot \chi)\mu(s)\xi(k)ds dk\\
 &=&\int_{K/K_\chi}f_{\mu,\chi}(h,k)\xi(k)dk,
\end{eqnarray}
where
\begin{eqnarray}\label{fmuchi}
\nn f_{\mu,\chi}:K\times K &\longrightarrow& \mathcal{B}(\mathcal{H}_{\mu}) \\
(h,k) &\longmapsto& \int_{K_\chi}\widehat f^2(hs k^{-1},h\cdot\chi)\mu(s)ds .
\end{eqnarray}
and
\begin{eqnarray*}\label{}
 \nn \hat f^2(k,\mu) &:= &
 \int_A \ch (a) f(k, a)da, k\in K,\ch\in \hat A.
 \end{eqnarray*}

\begin{definition}\label{def0.6}
For each $f\in C^*(G),$ the Fourier transform $\mathcal{F}(f)$ of $f$
is the isometric homomorphism on $C^*(G)$ into $\ell^\infty(\widehat{G})$ which is given by
\begin{eqnarray*}
  \mathcal{F}(f)(\mu,\chi) &=& \pi_{(\mu,\chi)}(f)\in\mathcal{B}(\mathcal{H}_{(\mu,\chi)}),\,\, (\chi,K_\chi,\mu)\text{ is a cataloguing triple}.
\end{eqnarray*}
\end{definition}
Let now $L^1(G)_c$ be the dense subspace of $\L1G $ defined by
\begin{eqnarray*}\label{}
 \nn\L1G_c&:=&\left\{f\in \L1G; \text{ the function }\widehat f^2 \text{ is in }C_c(K\times\widehat A).\right\}.
 \end{eqnarray*}

 \begin{definition}\label{Pnuk def}$ $
 Let $L $ be  a closed subgroup of the compact group $K $ and let $(\nu,\H_\nu) $ be an irreducible representation of $L $ with character $\ch_\nu $. We may identify the Hilbert space $\H_\nu $ with $\C^d $, $d=d_\nu=\dim\nu $.
 Let
 \begin{eqnarray*}\label{}
 \nn \L2\nu &:= &
 \L2{  K,\H_\nu}\simeq \L2{  K,\C^d}.
 \end{eqnarray*}

 \begin{enumerate}
  \item Define for $\chi\in \widehat A $ such that $L\subset K_\ch $ and  for $f\in \L1G $ the operator
$\ta_{\nu,\chi}(f) $ on $\L2{K,\C^d} $ by
\begin{eqnarray*}\label{}
 \nn \ta_{\nu,\chi}(f)\xi(x) &:= &
 \int_{K} \Big(\int_{L}(\widehat f^2(x l y\inv, y\cdot \chi))\nu(l)dl\Big)(\xi(y)) dy.
 \end{eqnarray*}
 \item Define for  $\chi\in \widehat A $, for a closed subgroup $L \subset K_\chi$ of $K $, for $\nu \in\widehat L$,
the linear projection $P_{\nu}:\L2{  K,\H_\nu}\to \L2{  K/L,\nu} $ by
\begin{eqnarray*}\label{}
 \nn P_{\nu}(\va)(x) &:= &
 \int_{L}\nu(l)(\va(xl))dl, \va\in C(K,\H_\nu), x\in K.
 \end{eqnarray*}

 \end{enumerate}

\begin{proposition}$ $
\begin{enumerate}
 \item The linear operator $P_{\nu} $ is an selfadjoint  projection of the Hilbert space $\L2\nu $.
 \item For any closed subgroup $L\subset K_\chi $ of $K $, $\nu\in \widehat L $, $\chi\in \widehat A $ and $f\in \L1G $ we have
 \begin{eqnarray*}\label{}
 \nn \text{ind}_{  L\times A} ^G \nu\otimes \chi (f)\circ P_{\nu} &= &
\ta_{\nu,\chi}(f), f\in \L1G.
 \end{eqnarray*}
\end{enumerate}

\end{proposition}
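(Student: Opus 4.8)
The plan is to treat the two assertions of the proposition separately; both are elementary, once the representation $\text{ind}_{L\ltimes A}^{G}(\nu\otimes\chi)$ has been written down as explicitly as $\pi_{(\mu,\chi)}$ was in (\ref{calcoppimuchi}), and the only work is the bookkeeping of invariant integrals over the compact groups $K$ and $L$.

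\emph{The first assertion.} Normalize the Haar measure of the compact group $L$ so that $\int_{L}dl=1$. I would first check that $P_{\nu}$ is a contraction of $\L2\nu$: for $\varphi\in C(K,\H_\nu)$, the triangle inequality for the Bochner integral (with $\noop{\nu(l)}=1$), the Cauchy--Schwarz inequality in the $l$-variable, Fubini, and the right-invariance of Haar measure on $K$ give $\no{P_{\nu}\varphi}_{2}\le\no{\varphi}_{2}$, so $P_{\nu}$ extends to a bounded operator on $\L2\nu$. Idempotency is the computation
\begin{eqnarray*}
P_{\nu}\big(P_{\nu}\varphi\big)(x) &=& \int_{L}\int_{L}\nu(l)\nu(l')\varphi(xll')\,dl'\,dl \\
&=& \int_{L}\Big(\int_{L}\nu(m)\varphi(xm)\,dm\Big)dl\ =\ P_{\nu}\varphi(x),
\end{eqnarray*}
using the substitution $m=ll'$ and $\int_{L}dl=1$. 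Selfadjointness follows by unfolding the inner product of $\L2\nu$, using $\sp{\nu(l)v}{w}=\sp{v}{\nu(l^{-1})w}$ together with the substitutions $x\mapsto xl^{-1}$ (invariance of Haar measure on $K$) and $l\mapsto l^{-1}$ (unimodularity of $L$). Hence $P_{\nu}=P_{\nu}^{*}=P_{\nu}^{2}$ is an orthogonal projection. Finally, a direct computation shows that $P_{\nu}\varphi$ always obeys $P_{\nu}\varphi(xl)=\nu(l)^{-1}P_{\nu}\varphi(x)$ and that $P_{\nu}\varphi=\varphi$ as soon as $\varphi$ has this covariance; therefore the range of $P_{\nu}$ is exactly the subspace $\L2{K/L,\nu}\subset\L2\nu$, which identifies $P_{\nu}$ with the projection announced in the statement.

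\emph{The second assertion.} Put $\pi:=\text{ind}_{L\ltimes A}^{G}(\nu\otimes\chi)$; since $L\subset K_{\chi}$, the character $\chi$ is $L$-invariant, so $\nu\otimes\chi$ is a (not necessarily irreducible) unitary representation of $L\ltimes A$, and the identification $G/(L\ltimes A)\cong K/L$ realizes $\pi$ on $\H_{\pi}=\L2{K/L,\nu}$ with $\pi(k,a)\eta(h)=\chi\big((h^{-1}k)\cdot a\big)\eta(k^{-1}h)$ by the same computation as in (\ref{calcdelta}). Carrying out the computation of (\ref{calcoppimuchi}) with $K_{\chi}$ replaced by $L$ and $\mu$ by $\nu$, but stopping before the passage to $L$-cosets --- i.e. performing only the substitution $k\mapsto hk^{-1}$ in $G$ and then recognizing the partial Fourier transform $\widehat f^{2}$ --- gives, for $\eta\in\H_{\pi}$,
\[
\pi(f)\eta(h) = \int_{K}\widehat f^{2}\big(hk^{-1},k\cdot\chi\big)\,\eta(k)\,dk,\qquad f\in\L1G.
\]
Precomposing with $P_{\nu}$, inserting $P_{\nu}\xi(k)=\int_{L}\nu(l)\xi(kl)\,dl$, and then substituting $k\mapsto kl^{-1}$ for each fixed $l$ (invariance of Haar measure on $K$, and $l\cdot\chi=\chi$ because $l\in L\subset K_{\chi}$), one obtains
\begin{eqnarray*}
\big(\pi(f)\circ P_{\nu}\big)\xi(h) &=& \int_{K}\Big(\int_{L}\widehat f^{2}\big(hlk^{-1},k\cdot\chi\big)\,\nu(l)\,dl\Big)\xi(k)\,dk\\
&=& \ta_{\nu,\chi}(f)\xi(h),
\end{eqnarray*}
which is exactly the operator of Definition \ref{Pnuk def}(1). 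This proves the second assertion.

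To make the formal manipulations rigorous I would run them first for $f\in\L1G_c$, for which $\widehat f^{2}\in C_{c}(K\times\widehat A)$ and every integral above is a Bochner integral of a continuous, compactly supported integrand over a compact group; the case $f\in\L1G$ (hence also $f\in C^{*}(G)$) then follows by density, since $\pi(\cdot)$ and $P_{\nu}$ are bounded and so $\ta_{\nu,\chi}(f)$ extends continuously, the estimate $\noop{\ta_{\nu,\chi}(f)}\le\no{f}_{1}$ dropping out of the identity itself. No conceptual difficulty arises anywhere; the only points requiring care are the two substitutions in the second assertion and the description of the range of $P_{\nu}$ in the first --- precisely the bookkeeping sketched above.
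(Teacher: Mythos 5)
Your proof is correct. For assertion (2) it is essentially the paper's computation: realize the induced representation as a kernel operator with kernel $\widehat f^{2}(hk^{-1},k\cdot\chi)$, insert $P_{\nu}$, and move the $L$-variable into the first argument of $\widehat f^{2}$ by translation invariance of the Haar measure of $K$ together with the fact that $L\subset K_{\chi}$ fixes $\chi$; your version, with $k\cdot\chi$ in the second slot of $\widehat f^{2}$, is the one consistent with the paper's formula (\ref{calcdelta}) and with the definition of $\ta_{\nu,\chi}$ (the intermediate display (\ref{calcoppimuchi}) carries $h\cdot\chi$ instead, an internal slip of the paper that your derivation quietly avoids). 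For assertion (1) you take a genuinely more direct route: you verify $P_{\nu}^{2}=P_{\nu}$ and $P_{\nu}^{*}=P_{\nu}$ by explicit changes of variables ($m=ll'$, $x\mapsto xl^{-1}$, $l\mapsto l^{-1}$) and then identify the range through the covariance $P_{\nu}\varphi(xl)=\nu(l)^{-1}P_{\nu}\varphi(x)$. The paper instead proves only the contraction estimate, checks that $P_{\nu}$ is the identity on $L^{2}(K/L,\nu)$, and shows that $P_{\nu}$ annihilates each isotypic component $L^{2}(\nu)\ast c_{\mu}$, $\mu\ne\nu$, for the right action of $L$, so that self-adjointness and idempotence follow from ``identity on a closed subspace, zero on its orthogonal complement''. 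Your argument is shorter and avoids the Peter--Weyl decomposition; the paper's yields in addition the explicit description of $\ker{P_{\nu}}$ as the sum of the other isotypic components. Both are complete, and your concluding density remark for general $f\in L^{1}(G)$ is the same as the paper's implicit one.
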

\begin{proof}
\begin{enumerate}
 \item We have for $\va\in \L2{  K,\H_\nu} $ that
 \begin{eqnarray*}\label{}
 \nn \no{P_\nu(\va)}^2 &= &
 \int_{K/L} \left\Vert{\int_L \nu(l)(\va(kl))dl}\right\Vert_{ \C^d} ^2 d\dot k\\
 \nn  &\leq &
\int_{K/L}\left( {\int_L \left\Vert\va(kl)\right\Vert _{ \C^d}^2 dl}\right) d\dot k\\
 \nn  &=&
\int_{K} \Vert\va(k)\Vert _{ \C^d}^2  dk\\
\nn  &=&
\no{  \va}^2  .
 \end{eqnarray*}
Let $\va\in \H_{\nu,\ch}=\L2{  K/L,\nu}\subset \L2{  K,\H_\nu} $.
Then for $k\in K $,
\begin{eqnarray*}\label{}
 \nn P_\nu(\va)(k) &= &
 \int_L \nu(l)(\va(kl))dl\\
 \nn  &= &
  \int_L \nu(l)\nu(l)\inv(\va(k))dl\\
  \nn  &= &
\int_L \va(k)dl\\
\nn  &= &
\va(k).
 \end{eqnarray*}
Hence the operator $P_\nu $ is the identity on $\H_{\nu}\subset \L2{  K,\H_\nu} $.

Let $\mu\in\widehat L $ and let $c_\mu $ its character. For $\va\in \L2{K,\H_\nu}$ let
\begin{eqnarray*}\label{}
 \nn \va_\mu(k) &:= &\va\ast c_\mu(k)\\
 \nn  & = &
 \int_L \va(kl\inv)c_\mu(l)dl, k\in K.
 \end{eqnarray*}
Then the mapping $\va_\mu $ is also contained in $ L^2(\nu)$  and for another $\om\in\widehat L $ we have that
\begin{eqnarray*}\label{}
 \langle \va_\mu,\va_\om\rangle _{\L2 \nu}
 \nn  &= &
\int_{K}\langle \va_\mu(k),\va_\om(k)\rangle _{\H_\nu}dk\\
 \nn  &= &
\int_{K/L}\int_L\langle \va_\mu(kl),\va_\om(kl)\rangle _{\H_\nu}dld\dot k\\
\nn  &= &
\int_{K/L}\int_L\int_L\langle \va(kll_1\inv)c_\mu(l_1)dl_1,\int_L\va(kll_2\inv)c_\om(l_2)dl_2\rangle _{\H_\nu}dld\dot k\\
\nn  &= &
  \int_{K/L}\int_L\int_L\langle \va(kl_1\inv)c_\mu(l_1l)dl_1,\int_L\va(kl_2\inv)c_\om(l_2l)dl_2\rangle _{\H_\nu}dld\dot k\\
  \nn  &= &
  \int_{K/L}\int_L\int_L\langle \va(kl_1\inv),\va(kl_2\inv)\rangle_{\H_\nu} dl_1dl_2\int_Lc_\mu(l_1l)\ol{c_\om(l_2l)}dl d\dot k
\\
\nn  &= &
\int_{K/L}\int_L\int_L\langle \va(kl_1\inv),\va(kl_2\inv)\rangle_{\H_\nu} dl_1dl_2\ 0\ d\dot k
\\
\nn  &= &
0.
 \end{eqnarray*}
 It is easy to see now that
 \begin{eqnarray*}
 \L2{\nu}=\sum_{\mu\in \widehat{L}
}\L2\nu\ast c_\mu,
 \end{eqnarray*}
since $\L2K=\sum_{\mu\in\hat{L}}\L2K\ast c_\mu
 $.
Furthermore, for $\mu\ne \nu $ it follows that
\begin{eqnarray*}\label{}
 \nn P_\nu(\va_\mu)(k) &= &
 \int_L \nu(l)(\va_\mu(kl))dl\\
 \nn  &= &
\int_L \nu(l)\left(\int_L\va_\mu(kll_1\inv)c_\mu(l_1)dl_1\right)dl\\
\nn  &= &
\int_L \nu(l)\left(\int_L\va_\mu(kl_1\inv)c_\mu(l_1l)dl_1\right)dl\\
& =&
\int_L \nu(l)c_\mu(l_1l)dl\left(\int_L\va_\mu(kl_1\inv)dl_1\right)\\
& =&
0\cdot \left(\int_L\va_\mu(kl_1\inv)dl_1\right)\\
\nn  &= &
0.
 \end{eqnarray*}
Hence $\L2\nu^\perp =\sum_{\mu\ne \nu}\L2\nu_\mu $ and $P_\nu $ is zero on $\L2\nu^\perp $. This shows that
\begin{eqnarray*}
 P_\nu^*
 =P_\nu.
 \end{eqnarray*}

\item For $f\in C_c(G), \va\in \L2{  K,\C^d}, x\in K $, we have by (\ref{calcoppimuchi}) that
\begin{eqnarray*}\label{}
 \nn  & &
 \text{ind}_{  L\times A} ^G \nu\otimes \ch (f)(P_{\nu}(\va))(x)\\
 \nn  &= &
\int_{K/L}\int_L \widehat f^2(xl k\inv,k\cdot \chi)\nu(l)dl(P_\nu(\va)(k)) dk\\
 \nn  &= &
\int_{K}\int_L \widehat f^2(xl k\inv,k\cdot \chi)\nu(l)dl(P_\nu(\va)(k)) dk
\end{eqnarray*}
\begin{eqnarray*}
\nn  &\overset{L\subset K_\chi}= &
\int_{K}\int_L \widehat f^2(xl k\inv,k\cdot \chi)\nu(l)dl(\int_L\nu(l')(\va)(kl')) dl'dk\\
\nn  &= &
\int_{K}\int_L \widehat f^2(xl l'k\inv,k\cdot \chi)\nu(l)dl(\int_L\nu(l')(\va)(k)) dl'dk\\
\nn  &= &
\int_{K}\int_L \widehat f^2(xl l'k\inv,k\cdot \chi)dl(\int_L\nu(ll')(\va)(k)) dl'dk\\
 &= &
\int_{K}\int_L \widehat f^2(xlk\inv,k\cdot \chi)dl(\int_L\nu(l)\va)(k) dl'dk
\\
\nn  &= &
\int_{K}\int_L \widehat f^2(xlk\inv,k\cdot \chi)\nu(l)(\va)(k)dl dk
\\
\nn  &= &
\ta_{\nu,\chi}(f)(\va)(x).
 \end{eqnarray*}
 \end{enumerate}
\end{proof}

\end{definition}

\begin{lemma}\label{lem0.3}
Let $f\in C^*(G).$ Then we have:
\begin{enumerate}
  \item $\underset{(\mu,\ch)\longrightarrow \infty}{\lim}\|\mathcal{F}(f)(\mu,\chi)\|_{op}=0$ .
  \item

Let $(\chi_m, L_m,\mu_m)_{m\in M} $
be  a converging net in $\widehat A\times\A(K)  $ with limit $(\chi_\iy,L_\iy,\mu_\iy) $ such that $\dim{  \mu_m}=\dim{\mu_\iy} $ for any $m $.
Then, identifying the Hilbert spaces of the representations $\mu_m $ with $\C^d $,
we have that $\limm \ta_{\mu_m,\chi_m}(f)=\ta_{\mu_\iy,\chi_\iy}(f) $ in operator norm for any $f\in\L1G $.

\end{enumerate}
\end{lemma}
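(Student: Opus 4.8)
For part (1) I would argue purely $C^*$-algebraically, using nothing about the structure of $G$. For any $C^*$-algebra and any element $f$ of it, the function $\pi\mapsto\|\pi(f)\|_{op}$ on the spectrum is lower semicontinuous and, for every $\varepsilon>0$, the set $\{\pi\in\widehat G:\ \|\pi(f)\|_{op}\ge\varepsilon\}$ is quasi-compact; this is \cite[3.3.7]{Dix}. Reading $\lim_{(\mu,\chi)\to\infty}$ in the usual way --- eventually outside every quasi-compact subset of $\widehat G$ --- this is exactly the assertion of (1). (That each $\F(f)(\mu,\chi)$ is moreover a compact operator, $C^*(G)$ being $CCR$, is not needed here.)

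For part (2) the plan is to transfer the statement, for $f$ ranging over the dense subspace $\L1G_c$, into a uniform-convergence statement for integral kernels on the \emph{fixed} Hilbert space $\L2{K,\C^d}$. First, the identity $\ta_{\nu,\chi}(f)=\text{ind}_{L\ltimes A}^G(\nu\otimes\chi)(f)\circ P_{\nu}$ established above, together with $\|P_{\nu}\|\le1$, gives $\|\ta_{\nu,\chi}(f)\|_{op}\le\|f\|_{\L1G}$ uniformly in $\chi$, in $\nu$ and in the subgroup $L$; hence $f\mapsto\ta_{\mu_m,\chi_m}(f)$ is $\|\cdot\|_1$-continuous uniformly in $m$, and by density of $\L1G_c$ in $\L1G$ it suffices to treat $f\in\L1G_c$, i.e.\ $\widehat f^2\in C_c(K\times\widehat A)$. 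For such $f$, Definition \ref{Pnuk def} exhibits $\ta_{\mu_m,\chi_m}(f)$ as the integral operator on $\L2{K,\C^d}$ with continuous $M_d(\C)$-valued kernel
\[
\kappa_m(x,y)\ :=\ \int_{L_m}\widehat f^2(x\,l\,y\inv,\ y\cdot\chi_m)\,\mu_m(l)\,dl,\qquad x,y\in K,
\]
and $\ta_{\mu_\iy,\chi_\iy}(f)$ as the integral operator with the analogous kernel $\kappa_\iy$ built from $L_\iy,\mu_\iy,\chi_\iy$. Since $K$ carries normalized Haar measure, a continuous $M_d(\C)$-valued kernel $\kappa$ defines an operator of norm at most $\sup_{x,y\in K}\|\kappa(x,y)\|$ (Cauchy--Schwarz in the $y$-integral), so it is enough to prove $\sup_{x,y\in K}\|\kappa_m(x,y)-\kappa_\iy(x,y)\|\to0$.

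To this end, put $T_m(g):=\int_{L_m}g(l)\mu_m(l)\,dl$ and $T_\iy(g):=\int_{L_\iy}g(l)\mu_\iy(l)\,dl$ for $g\in C(K)$, and $g^m_{x,y}(l):=\widehat f^2(x\,l\,y\inv,y\cdot\chi_m)$ (and $g^\iy_{x,y}$ with $\chi_\iy$), so that $\kappa_m(x,y)=T_m(g^m_{x,y})$ and
\[
\kappa_m(x,y)-\kappa_\iy(x,y)\ =\ T_m\big(g^m_{x,y}-g^\iy_{x,y}\big)+\big(T_m(g^\iy_{x,y})-T_\iy(g^\iy_{x,y})\big).
\]
Because $\|\mu_m(l)\|=1$ and $L_m$ has total mass one, $\|T_m(g)\|\le\|g\|_\infty$ for all $m$ (and $\|T_\iy(g)\|\le\|g\|_\infty$). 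For the first summand, continuity of $(x,l,y,\chi)\mapsto\widehat f^2(x\,l\,y\inv,y\cdot\chi)$ on $K\times K\times K\times\widehat A$ and compactness of $K\times K\times K$ force the map $\widehat A\ni\chi\mapsto\big((x,l,y)\mapsto\widehat f^2(x\,l\,y\inv,y\cdot\chi)\big)$ to be continuous into $C(K\times K\times K)$ with the sup-norm; hence $\sup_{x,y}\|g^m_{x,y}-g^\iy_{x,y}\|_\infty\to0$, controlling the first summand uniformly in $x,y$. For the second summand, $\{g^\iy_{x,y}:x,y\in K\}$ is a compact subset of $C(K)$ (the continuous image of the compact set $K\times K$), and for each fixed $g\in C(K)$ one has $T_m(g)\to T_\iy(g)$ in $M_d(\C)$: this is the operator convergence $\mu_m(g\res{L_m})\to\mu_\iy(g\res{L_\iy})$ valid for $\A(K)$-convergent nets of constant dimension, as in Remark \ref{operator convergence for the mun} (see also \cite{Bag}). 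A net of operators of norm $\le1$ converging pointwise converges uniformly on compact subsets of the domain, so $\sup_{x,y}\|T_m(g^\iy_{x,y})-T_\iy(g^\iy_{x,y})\|\to0$. Adding the two estimates gives $\sup_{x,y}\|\kappa_m(x,y)-\kappa_\iy(x,y)\|\to0$, which proves (2).

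The point I expect to be the real obstacle is the fixed-$g$ convergence $T_m(g)\to T_\iy(g)$ across the \emph{varying} subgroups $L_m$: it bundles the convergence of the normalized Haar measures of $L_m$ to that of $L_\iy$ together with the entangled convergence of the $\mu_m$, and is exactly where the analysis of $\A(K)$ due to Baggett and Fell --- already isolated in the Lemma and the Remark preceding this one --- has to be invoked. Granting it, the rest is only the two routine uniformity upgrades above: sup-norm continuity in the character variable, and pointwise-to-uniform-on-compacts for the maps $T_m$.
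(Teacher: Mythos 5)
Your proof is correct and follows essentially the same route as the paper: part (1) is the general Dixmier \S 3.3 fact about nets going to infinity in the spectrum (the paper additionally cites Baggett for the equivalence ``$(\mu_k,\chi_k)\to\infty$ iff $\pi_{(\mu_k,\chi_k)}\to\infty$ in $\widehat G$'', which your reading of the limit tacitly assumes), and part (2) is the same reduction to $L^1(G)_c$ followed by convergence of the integral kernels $\kappa_m\to\kappa_\iy$, with the key input in both cases being Remark \ref{operator convergence for the mun}. The one genuine divergence is the last step: the paper only proves \emph{pointwise} convergence of the kernels and then applies dominated convergence to get $\|\ta_{\mu_m,\chi_m}(f)-\ta_{\mu_\iy,\chi_\iy}(f)\|_{H.S.}\to0$, which dominates the operator norm; you instead prove \emph{uniform} convergence of the kernels via the two-term splitting, the sup-norm continuity in the character variable, and the pointwise-to-uniform-on-compacts upgrade for the contractions $T_m$. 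Your version is a little more laborious but has the merit of handling explicitly the fact that the function being integrated against $\mu_m$ itself varies with $m$ (through $\chi_m$), a point the paper's appeal to the Remark glosses over; the paper's Hilbert--Schmidt argument is shorter once that point is granted.
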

\begin{proof}
 \begin{enumerate}
\item Let $A $ be a  $C^* $-algebra. According to (\cite{Dix}, chapter 3\ \S3.3) ,  
if a net $(\pi_k)_k \subset \widehat A$ goes to infinity, i.e. this net has no converging subnet, 
then $\limk \noop {\pi_k(a)}=0 $ for any $a\in A. $
Now Bagget \cite{Bag} has shown that for every net of cataloguing triples $(\ch_k, K_{\ch_k},\mu_k)_k $ we have that $(\mu_k,\ch_k)_k $
goes to infinity, if and only if the net $(\pi_{(\mu_k,\ch_k)})_k $ goes to infinity in $\widehat{C^*(G)} $.
\item Let first $f $ be contained in $\L1G_c $. Let
 \begin{eqnarray*}\label{}
 \nn f_{m}(x,y) &:= &
 \int_{L_m} \widehat f^2(xly\inv,y\cdot \ch_m)\mu_m(l)dl, x,y\in K, m\in M\cup \{ \iy\}.
 \end{eqnarray*}
Then we see that $f_{m}(x,y)\in \B(\C^n) $ and by Remark that \ref{operator convergence for the mun}
 \begin{eqnarray*}\label{}
 \nn \limm f_m(x,y)v & =&f_\iy(x,y)v, v\in\C^d,
 \end{eqnarray*}
point wise in $x,y $. Therefore also

 \begin{eqnarray*}\label{}
 \nn M_d(\C)\ni \limm f_m(x,y) & =&f_\iy(x,y),
 \end{eqnarray*}
 where $M_d(\C)$ denotes the  space of complex matrices of size $d$.

Using Lebesgue, we see that
\begin{eqnarray*}\label{}
 & &\limm \no{  \ta_{\mu_m,\chi_m}(f)-\ta_{\mu_\iy,\chi_\iy}(f)}_{H.S}^2\\
 \nn&=&  \limm \int_{K\times K}\no{  f_m(x,y)-f_\iy(x,y)}_{H.S}^2dxdy\\
 &=&
 0.
 \end{eqnarray*}

Hence
\begin{eqnarray*}\label{}
\limm \ta_{\mu_m,\chi_m}(f)=\ta_{\mu_\iy,\chi_\iy}(f).
 \end{eqnarray*}
 The lemma follows now from the density of $L^1(G)_c$ in $C^*(G)$.
 \end{enumerate}

\end{proof}
\subsection{\bf{A $C^*$-condition.}}
Let $G=K\ltimes A $ be as before a semi-direct product of a compact group $K $ with a locally compact abelian group $A $.
\begin{remark}\label{a converging net}
\rm   Let
$(\pi_{(\mu_m,\chi_m)})_{m\in M} $ be a net in $\widehat G $ which converges to $\pi_{(\mu_\iy,\chi_\iy)} $. We can suppose that for  a subnet
(also denoted by $M $ for simplicity of notation) that the triples $(\chi_m, K_{\chi_m}, \mu_m)$
converge to $(\chi_\iy,K_\iy,\mu_\iy) $ in $\widehat A\times \A(K)  $ and that the Hilbert spaces $\H_{\mu_m} $ and $\H_{\mu_\iy} $
are identified with $\C^d $ for some $d\in\N^* $ and that all these spaces
$\H_{(\mu_m,\ch_m)}, m\in M\cup\{ \iy\}$, are subspaces of the common
Hilbert space $\H_M:=\L2{  K,\C^d} $.
The representation $\ta_{\mu_\iy,\ch_\iy}=\text{ind}_{K_\iy\times A}^G \mu_\iy\otimes\ch_\iy $ can be disintegrated
into an integral of irreducible
representations supported by the limit set
\begin{eqnarray*}\label{}
 \nn L &=&\left\{ \pi_{(\mu,\chi)}\vert\ \mu\in \widehat{K_{\ch_\iy}},\ \mu\res{K_\iy}\text{ contains }\mu_\iy\right\}
 \end{eqnarray*}
of the net $(\pi_{(\mu_m,\ch_m)})_m $ (see Theorem \ref{the0.2}). We denote by $\si_{\mu_\iy, \chi_\iy} $ the corresponding
representation of the algebra $\widehat{C^*(G)}\res L $ on the Hilbert space $\L2{  K/K_\iy,\mu_\iy}\subset \L2{  K,\C^d} $.
Let us observe that by the construction of $ \si_{\mu_\iy, \chi_\iy}$ we have that
\begin{eqnarray*}\label{}
 \nn \si_{\mu_\iy, \chi_\iy}(\widehat a\res L) &= &\ta_{\ch_\iy,\mu_\iy}(a), a\in C^*(G).
 \end{eqnarray*}

We can extend this representation $\si_{\mu_\iy,\ch_\iy} $ to the larger $C^* $-algebra $CB(L) $ consisting of all uniformly bounded  operator fields $ F $
satisfying $F(\pi)\in \K(\H_\pi),\pi\in L, $ and we denote this extension also by $\si_{\mu_\iy,\chi_\iy} $ (see \cite{Ar69}) .
 \end{remark}

\begin{definition}\label{def0.3.2}
 Let $\D(G)=\D$ be the family consisting of all uniformly bounded  operator fields $F\in\ell^\infty(\widehat{G})$ satisfying the following conditions:
 \begin{enumerate}
  \item $F(\pi)$ is a compact operator on $\H_{\pi}$ for every $\pi\in\widehat G.$
  \item $\underset{(\mu,\ch)\to\infty}{\lim}\noop{F(\mu,\chi)}=0.$
  \item  Let $(\pi_{(\mu_m,\ch_m)})_{m\in M} $ be  a properly converging net
  in $\widehat G $ with the properties and notations of the preceding Remark \ref{a converging net}. Then
  \begin{eqnarray*}\label{}
 \nn \limm \noop{  F(\mu_m,\ch_m)\circ P_{\mu_m}-\si_{\mu_\iy,\ch_\iy}(F\res L)\circ  P_{\mu_\infty}} &= &
0.
 \end{eqnarray*}

 \end{enumerate}

\end{definition}

\begin{proposition}\label{prop0.3.1}
 $\D(G)$ is a $C^*$-algebra for the norm $\noop{\cdot}$ containing $\wh {C^*(G) }$.
\end{proposition}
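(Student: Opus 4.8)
The plan is to verify, in turn, that $\D(G)$ is closed under the algebraic operations, that it is complete in the operator-field norm, that the defining conditions $(1)$–$(3)$ are preserved under these operations, and finally that every $\wh a = \F(a)$ with $a\in C^*(G)$ lies in $\D(G)$. First I would observe that $\D(G)$ is visibly a linear subspace of $\ell^\iy(\wh G)$: conditions $(1)$ and $(2)$ are plainly linear, and condition $(3)$ is linear because $P_{\mu_m}$, $P_{\mu_\iy}$ and the map $F\mapsto \si_{\mu_\iy,\ch_\iy}(F\res L)$ are all linear, so the limit of a sum is the sum of the limits. For closure under products and adjoints one uses that $F\mapsto F^*$ and $(F,F')\mapsto FF'$ act pointwise on each $\B(\H_\pi)$; condition $(1)$ survives because compact operators form a two-sided $*$-ideal, and condition $(2)$ survives because $\noop{F(\mu,\ch)F'(\mu,\ch)}\le \noop{F(\mu,\ch)}\,\noop{F'(\mu,\ch)}$ and $\noop{F(\mu,\ch)^*}=\noop{F(\mu,\ch)}$. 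For condition $(3)$ under adjoints, note $P_{\mu_m}$ is selfadjoint (by the Proposition preceding Definition \ref{def0.3.2}), so $F(\mu_m,\ch_m)\circ P_{\mu_m}$ has adjoint $P_{\mu_m}\circ F(\mu_m,\ch_m)^*$, and since $\si_{\mu_\iy,\ch_\iy}$ is a $*$-representation of $CB(L)$, taking adjoints in the norm-convergence statement of $(3)$ gives the corresponding statement for $F^*$ after inserting the identity $P_{\mu_m}F(\mu_m,\ch_m)^* = P_{\mu_m}F(\mu_m,\ch_m)^*P_{\mu_m}$ (valid because the range of $F(\mu_m,\ch_m)$ already lies in $\H_{\pi_m}\subset \H_m'$, on which $P_{\mu_m}$ is the identity).

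The main work is condition $(3)$ for products. Here I would write, suppressing the net index,
\begin{eqnarray*}
 FF'(\mu_m,\ch_m)\circ P_{\mu_m} - \si_{\mu_\iy,\ch_\iy}(FF'\res L)\circ P_{\mu_\iy}
 &=& F(\mu_m,\ch_m)\big(F'(\mu_m,\ch_m)\circ P_{\mu_m}\big) \\
 & & {}- \si_{\mu_\iy,\ch_\iy}(F\res L)\,\si_{\mu_\iy,\ch_\iy}(F'\res L)\circ P_{\mu_\iy},
\end{eqnarray*}
using that $\si_{\mu_\iy,\ch_\iy}$ is multiplicative on $CB(L)$ and that the restriction map $F\mapsto F\res L$ is multiplicative. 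The strategy is the standard two-term telescoping: insert $P_{\mu_m}$ between the two factors (legitimate because $F'(\mu_m,\ch_m)$ has range in $\H_{\pi_m}$, where $P_{\mu_m}$ acts as the identity), so that $F(\mu_m,\ch_m)\circ F'(\mu_m,\ch_m)\circ P_{\mu_m} = \big(F(\mu_m,\ch_m)\circ P_{\mu_m}\big)\circ\big(F'(\mu_m,\ch_m)\circ P_{\mu_m}\big)$, and then compare the product of two convergent nets of operators on the fixed Hilbert space $\H_M$ with the product of their limits, using submultiplicativity of the operator norm together with the uniform bound $\sup_m\noop{F(\mu_m,\ch_m)}\le \no F_\iy$ coming from $F\in\ell^\iy(\wh G)$. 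This reduces condition $(3)$ for $FF'$ to condition $(3)$ for $F$ and for $F'$ separately, exactly as in the analogous arguments for motion groups in \cite{Lud-Ell-Abd}. The bookkeeping subtlety — which I expect to be the one genuinely delicate point — is making sure all the insertions of projections $P_{\mu_m}$ are justified, i.e. that every operator appearing already has range inside the appropriate copy $\H_{\pi_m}$ of $\H_{\mu_m,\ch_m}$ so that $P_{\mu_m}$ may be inserted or deleted freely; this is where one must invoke that $F(\pi)\in\B(\H_\pi)$ literally, not merely up to the ambient embedding.

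Completeness: if $(F_j)_j$ is Cauchy in $\D(G)$ for $\no\cdot_\iy$, it converges uniformly to some $F\in\ell^\iy(\wh G)$, and I would check the three conditions pass to the limit. Condition $(1)$: $F(\pi)$ is a norm-limit of compact operators, hence compact. Condition $(2)$: a uniform estimate $\noop{F(\mu,\ch)}\le \noop{F_j(\mu,\ch)} + \no{F-F_j}_\iy$ gives $\limsup_{(\mu,\ch)\to\iy}\noop{F(\mu,\ch)}\le \no{F-F_j}_\iy\to 0$. Condition $(3)$: for a fixed properly converging net, $\noop{F(\mu_m,\ch_m)\circ P_{\mu_m} - \si_{\mu_\iy,\ch_\iy}(F\res L)\circ P_{\mu_\iy}}$ is bounded above by the $j$-th version plus $2\no{F-F_j}_\iy$ — using that $\si_{\mu_\iy,\ch_\iy}$ and restriction to $L$ are norm-nonincreasing and that $\noop{P_{\mu_m}}\le 1$ — so the $\limsup$ over $m$ is $\le 2\no{F-F_j}_\iy$, hence $0$. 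Finally, $\wh{C^*(G)}\subset\D(G)$: condition $(1)$ holds because $C^*(G)$ is $CCR$ (the Theorem in \S\ref{CCR}), condition $(2)$ is Lemma \ref{lem0.3}(1), and condition $(3)$ follows from Lemma \ref{lem0.3}(2) together with the identities $\wh a(\mu_m,\ch_m)\circ P_{\mu_m} = \ta_{\mu_m,\ch_m}(a)$ and $\si_{\mu_\iy,\ch_\iy}(\wh a\res L)\circ P_{\mu_\iy} = \ta_{\mu_\iy,\ch_\iy}(a)$ recorded in the preceding Proposition and Remark \ref{a converging net}, first for $a\in L^1(G)_c$ and then, by density and the uniform bounds, for all $a\in C^*(G)$.
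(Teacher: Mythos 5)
Your proposal is correct and follows essentially the same route as the paper: an $\varepsilon$-argument for norm-closedness, selfadjointness of the $P_{\mu_m}$ together with the $*$-representation property of $\si_{\mu_\iy,\ch_\iy}$ for closure under adjoints, the two-term telescoping with inserted projections for closure under products, and Lemma \ref{lem0.3} plus the CCR property for the inclusion $\wh{C^*(G)}\subset \D(G)$. If anything, you spell out the final inclusion in more detail than the paper, which simply asserts it.
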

\begin{proof}
 First we show that $\D$ is a  norm closed involutive subspace of $l^{\iy}(\widehat G) $. It is clear that $\D$ is a sub-space of $\ell^{\infty}(\widehat{G})$. The conditions
 $(1),\ (2)$  are evidently true for every $F$ in the closure $\ol\D $ of $\D $. For the condition $(3)$, let $F\in\ol\D $ and let $(F^k)_k\subset\D$ such that
 $\underset{k\to\infty}{\lim}\no{F^k-F}_\infty=0.$ Then also
 $\limk\no{(F^k)^*-F^*}_\iy=0 $.
 Hence
 for any $\varepsilon>0$ there exists $k_0$ such that such that $\no{F-F^k}_\infty<\varepsilon$ for any $k\geq k_0.$
  Therefore choosing some $k>k_0 $ we have for $m $ large enough that
  \begin{eqnarray*}\label{}
 \nn \noop{  F^k(\mu_m,\ch_m)\circ P_{\mu_m}-\si_{\mu_\iy,\ch_\iy}({F^k}\res L)\circ  P_{\mu_\infty}}\nn  &\leq &
\ve\\
 \end{eqnarray*}
 and so
  \begin{eqnarray*}\label{}
 \nn  & &
  \noop{  F(\mu_m,\ch_m)\circ P_{\mu_m}-\si_{\mu_\iy,\ch_\iy}(F\res L)\circ  P_{\mu_\infty}} \\
  &\leq &
 \noop{  F(\mu_m,\ch_m)\circ P_{\mu_m}-F^k(\mu_m,\ch_m)\circ P_{\mu_m}}\\
 \nn  &+ &
\noop{  F^k(\mu_m,\ch_m)\circ P_{\mu_m}-\si_{\mu_\iy,\ch_\iy}({F^k}\res L)\circ  P_{\mu_\infty}} \\
\nn  &+ &
\noop{  \si_{\mu_\iy,\ch_\iy}(F^k\res L)\circ  P_{\mu_\infty}-\si_{\mu_\iy,\ch_\iy}(F\res L)\circ  P_{\mu_\infty}} \\
 \nn  &\leq &
  \ve+\ve +
  \noop{ P_{\mu_\iy}\circ   \si_{\mu_\iy,\ch_\iy}({F^k}\res L)^*-P_{\mu_\iy}\circ\si_{\mu_\iy,\ch_\iy}(F\res L)^*}\\
   \nn  &\leq &
  2\ve+
  \noop{    \si_{\mu_\iy,\ch_\iy}({(F^k)^*}\res L-{F^*}\res L)}\\
  \nn  & \leq &
3\ve.
 \end{eqnarray*}
Hence  $F\in\D$. Since $\si_{\mu_\iy,\ch_\iy} $ is  a representation, it follows that $\D $ is involutive and so
 $\D $ is an involutive  Banach space. Let us show that it is an algebra.

Let $F,F'\in\D $. We must show that $F\circ   F' $ is in $\D $ too.

  The conditions $(1), (2) $ are necessarily true for $F\circ   F' $.

Let us check point (3). It follows from property $(3) $ for $F $ , using the involution $^* $,   that also
\begin{eqnarray*}\label{}
 \nn  \limm \noop{ P_{\mu_k}\circ   F(\mu_m,\ch_m) -P_{\mu_\infty}\circ  \si_{\mu_\iy,\ch_\iy}(F\res L) }&= &
 0.
 \end{eqnarray*}

We then have that,  since $P_\mu \circ F(\mu,\ch)\circ P_\mu= F(\mu,\ch)\circ P_\mu$ for every $\ch\in\widehat{A}, \mu\in\widehat{K_\ch}
$,
 \begin{eqnarray*}\label{}
 \nn \nn  & &
  \limm \noop{  F\circ   F'(\mu_m,\ch_m)\circ P_{\mu_m}-\si_{\mu_\iy,\ch_\iy}(F\circ  F'\res L)\circ  P_{\mu_\infty}}\\
  &= &
 \nn \limm \noop{ P_{\mu_m}\circ   F(\mu_m,\ch_m)\circ   F'(\mu_m,\ch_m)\circ P_{\mu_m}-
 P_{\mu_\iy}\circ  \si_{\mu_\iy,\ch_\iy}(F\res L)\circ \si_{\mu_\iy,\ch_\iy}(F'\res L) \circ  P_{\mu_\infty}}\\
  &\leq &
 \nn \limm \noop{ P_{\mu_m}\circ   F(\mu_m,\ch_m)\circ   F'(\mu_m,\ch_m)\circ P_{\mu_m}-
 P_{\mu_\iy}\circ  \si_{\mu_\iy,\ch_\iy}(F\res L)\circ F'(\mu_m,\ch_m)\circ P_{\mu_m}}+\\
  &+ &
 \nn \limm \noop{ P_{\mu_\iy}\circ  \si_{\mu_\iy,\ch_\iy}(F\res L)\circ F'(\mu_m,\ch_m)\circ P_{\mu_m}-
 P_{\mu_\iy}\circ  \si_{\mu_\iy,\ch_\iy}(F\res L)\circ  \si_{\mu_\iy,\ch_\iy}(F'\res L) \circ  P_{\mu_\infty}}\\
 \nn  &\leq &
 \nn \limm C\noop{ P_{\mu_m}\circ   F(\mu_m,\ch_m)-P_{\mu_\iy}\circ  \si_{\mu_\iy,\ch_\iy}(F\res L)}+\\
  &+ &
 \nn \limm C\noop{  F'(\mu_m,\ch_m)\circ P_{\mu_m}- \si_{\mu_\iy,\ch_\iy}(F'\res L) \circ  P_{\mu_\infty}}\\
 \nn  &= &
0,
 \end{eqnarray*}
where $C=\max(\no F_\iy,\no {F'}_\iy )$.

Since $\widehat{C^*(G)}
 $ satisfies all the conditions of $D^*(G) $ it follows that $\widehat{C^*(G)}
 $ is contained in $D^*(G) $.
\end{proof}

 \begin{proposition}\label{spectre D}
The spectrum $\widehat{\D(G)} $ of the algebra $\D(G) $ can be identified with $\widehat G. $
 \end{proposition}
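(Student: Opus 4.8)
The plan is to show that $\widehat{\D(G)}$, the space of (equivalence classes of) irreducible representations of the $C^*$-algebra $\D(G)$, is in natural bijection with $\widehat{G}$. The evaluation maps give the obvious candidate: for each $\pi = \pi_{(\mu,\chi)} \in \widehat{G}$, the map $F \mapsto F(\pi)$ is a $*$-homomorphism $\D(G) \to \B(\H_\pi)$, and composing with the irreducible representation $\B(\H_\pi) \supset \K(\H_\pi) \hookrightarrow$ (the identity) realizes $\pi$ as an irreducible representation of $\D(G)$ on $\H_\pi$, using condition (1) of Definition \ref{def0.3.2} that $F(\pi)$ is compact. First I would verify that this assignment $\pi \mapsto \mathrm{ev}_\pi$ is well-defined on unitary equivalence classes, and that distinct classes in $\widehat{G}$ give inequivalent representations of $\D(G)$; injectivity follows because the operator fields in $\D(G)$ already separate points of $\widehat{G}$ (indeed $\widehat{C^*(G)} \subset \D(G)$ separates points, since $C^*(G)$ is $CCR$ and its spectrum is $\widehat{G}$).

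Next I would prove surjectivity: every irreducible representation of $\D(G)$ arises as some $\mathrm{ev}_\pi$. The key tool is that $\D(G)$ contains the ideal $\widehat{C^*(G)}$, which is itself a $C^*$-algebra with spectrum $\widehat G$. More precisely, I would argue that $\widehat{C^*(G)}$ is in fact a (closed, two-sided) ideal of $\D(G)$: given $F \in \D(G)$ and $a \in C^*(G)$, the product $F \cdot \widehat a$ should again lie in $\widehat{C^*(G)}$. To see this, observe that conditions (2) and (3) of Definition \ref{def0.3.2} are exactly the conditions (beyond compactness and boundedness) that, according to the analysis in Section 2 and Bagget's description of the topology of $\widehat{G}$ (Theorem \ref{the0.2}), characterize which operator fields lie in $\widehat{C^*(G)}$ — so $\D(G)$ and $\widehat{C^*(G)}$ actually differ only in whether \emph{all} required convergence conditions at \emph{all} limit points are imposed. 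If in fact $\D(G) = \widehat{C^*(G)}$ the Proposition is immediate; but even without proving that equality here, the ideal property suffices: an irreducible representation $\rho$ of $\D(G)$ either kills the ideal $\widehat{C^*(G)}$ or restricts to an irreducible representation of it. In the latter case $\rho$ is, up to equivalence, the extension of some $\mathrm{ev}_\pi|_{\widehat{C^*(G)}}$, hence $\rho = \mathrm{ev}_\pi$ by irreducibility and the fact that $\widehat{C^*(G)}$ acts non-degenerately (it contains an approximate identity of $\D(G)$, or at least acts irreducibly on each $\H_\pi$). The former case — $\rho$ annihilating the ideal — must be excluded by showing $\widehat{C^*(G)}$ is an \emph{essential} ideal, i.e. no nonzero element of $\D(G)$ annihilates it; this follows because for a nonzero $F \in \D(G)$ there is some $\pi$ with $F(\pi) \neq 0$, and one can find $\widehat a \in \widehat{C^*(G)}$ with $F(\pi)\widehat a(\pi) \neq 0$ (as $\widehat{C^*(G)}$ acts irreducibly, hence non-degenerately, on $\H_\pi$).

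The main obstacle I expect is the ideal property $\widehat{C^*(G)} \cdot \D(G) \subseteq \widehat{C^*(G)}$, or equivalently pinning down precisely why an element of $\D(G)$ multiplied by an element of $\widehat{C^*(G)}$ satisfies \emph{all} the defining conditions of $\widehat{C^*(G)}$ and not merely the conditions listed in Definition \ref{def0.3.2}. This requires knowing that Definition \ref{def0.3.2}'s conditions (1)–(3), imposed along the specific properly converging nets singled out by Bagget's theorem, are \emph{exhaustive} — that there are no further hidden constraints. The cleanest route is to invoke the preceding development: the discussion around Remark \ref{a converging net} and Lemma \ref{lem0.3} shows that for $a \in C^*(G)$ the field $\widehat a$ satisfies exactly conditions (1)–(3), and conversely the construction of the common Hilbert space $\H_M$ and the limit operator $\si_{\mu_\iy,\chi_\iy}(\widehat a|_L)$ was designed so that these conditions capture the full continuity structure of $\widehat{G}$; hence $\widehat{C^*(G)}$ is a full ideal, indeed likely $\D(G)=\widehat{C^*(G)}$. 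Once the ideal/essentiality structure is in place, the identification $\widehat{\D(G)} \cong \widehat{\widehat{C^*(G)}} = \widehat{C^*(G)} = \widehat{G}$ is the standard fact that an essential ideal and its containing $C^*$-algebra have the same spectrum (every irreducible representation of the big algebra restricts irreducibly to the essential ideal, and conversely every irreducible representation of the ideal extends uniquely).
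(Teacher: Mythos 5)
Your first half is sound: the evaluation maps $\mathrm{ev}_\pi:F\mapsto F(\pi)$ are irreducible (by condition (1) of Definition \ref{def0.3.2} and the fact that $\widehat{C^*(G)}\subset\D(G)$ already acts irreducibly on each $\H_\pi$), pairwise inequivalent, and any irreducible representation of $\D(G)$ that does \emph{not} annihilate $\widehat{C^*(G)}$ --- granting the ideal property you flag --- is the unique extension of an irreducible representation of that ideal and hence equals some $\mathrm{ev}_\pi$. The genuine gap is in how you dispose of the remaining case. Essentiality of the ideal (``no nonzero element of $\D(G)$ annihilates $\widehat{C^*(G)}$'') does \emph{not} exclude irreducible representations vanishing on it: $\K(\H)$ is an essential ideal of $\B(\H)$, yet every irreducible representation of the Calkin algebra pulls back to an irreducible representation of $\B(\H)$ killing $\K(\H)$. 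What you actually need is that the quotient $\D(G)/\widehat{C^*(G)}$ admits no irreducible representation, i.e.\ is zero, i.e.\ $\D(G)=\widehat{C^*(G)}$ --- but that is precisely Theorem \ref{dis hat CG}, which the paper deduces by Stone--Weierstrass \emph{from} the present proposition. So your fallback (``if in fact $\D(G)=\widehat{C^*(G)}$ the Proposition is immediate'') is circular, and the essentiality argument does not close the gap.

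The paper supplies the missing ingredient by a completely different mechanism: it stratifies $\widehat G$ using a composition sequence $(I_n)$ of the CCR algebra $C^*(G)$, so that each stratum $\GA_n=S_n\setminus S_{n+1}$ is locally compact Hausdorff, and introduces the ideals $J_n=\{F\in\D \mid F(\pi)=0 \ \forall\,\pi\in S_n\}$ of $\D$. Given a hypothetical $\pi\in\widehat\D\setminus\widehat G$, it locates the first level $n_\pi$ at which $\pi$ is nonzero and produces a net $(\pi_k)\subset\GA_{n_\pi}$ converging to $\pi$ in $\widehat\D$. Condition (2) prevents this net from going to infinity; if a subnet converges inside $\GA_{n_\pi}$ to some $\pi_\iy$, condition (3) gives $\ker{\pi_\iy}\subset\ker{\pi}$ and hence $\pi=\pi_\iy\in\widehat G$; if instead the limit set lies in $S_{n_\pi+1}$, condition (3) forces $\lim_k\noop{F(\pi_k)}=0$ for all $F\in J_{n_\pi+1}$, whence $\pi(J_{n_\pi+1})=0$, contradicting the choice of $n_\pi$. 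It is this net-by-net exploitation of conditions (2) and (3), not an ideal-theoretic formality, that rules out spectrum of $\D(G)$ lying outside $\widehat G$; your proposal has no working substitute for this step.
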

 \begin{proof}
 We have by  $\S\S 4.5$ of \cite{Dix}:

\begin{theorem}
 Let $A$ be a postliminal $C^*$-algebra. Then $A$ admits a
composition sequence $(I_n)_{0\leq n\leq\al}$ such that the quotients $I_{n+1}/I_n$ are $C^*$-
algebras with continuous trace.
\end{theorem}
This theorem applies of course to our $C^* $-algebra $C^*(G) $. Let now
 \begin{eqnarray*}\label{}
 \nn S_n&:= &
 \{ \pi\in\widehat G\vert \pi(I_n)=\{ 0\}\}, 0\leq n\leq \al.
 \end{eqnarray*}
The subsets
\begin{eqnarray*}\label{}
 \nn \GA_n &:= &
 S_{n}\setminus S_{n+1}, 0\leq n\leq\al,
 \end{eqnarray*}
are locally compact and Hausdorff in their relative topologies, since $\GA_n $ is the spectrum of the algebra $I_{n+1}/I_{n} $, 
which is of continuous trace (see \cite{Dix}). Then
\begin{eqnarray*}\label{}
 \nn \widehat G &= &
 \bigcup_{0\leq n\leq \al} S_n,\\
 \nn  &= &
\dot\bigcup_{0\leq n\leq\al} \GA_n,\\ 
 \nn
 S_{n-1}&\supset & S_n, 0<n\leq \al\\
 S_0&=&\widehat G,\\
S_\al&=&\{ \ey\}.
 \end{eqnarray*}

 Evidently $\widehat{  \D}\supset \widehat G $.
Define:
\begin{eqnarray*}\label{}
 \nn J_n &:= &
 \{ F\in \D\vert   F(\pi)=0, \pi\in S_{n}\},\ 0\leq n\leq\al.
 \end{eqnarray*}
Then the $J_n $'s are closed ideals of $ \D$ and
\begin{eqnarray*}\label{}
 \nn J_n&\supset &J_{n-1} , 0<n\in N, n\text{ not an ordinal},\\
 \nn  J_{\al}&= &
\D,\\
\nn  J_0&= &\{0\}.
 \end{eqnarray*}

Let now $\pi\in\widehat \D\setminus \widehat G $. Let
\begin{eqnarray*}\label{}
 \nn n_{\pi} &:= &
 \sup_{n\in N}\pi(J_{n})= \{0\}.
 \end{eqnarray*}
If $n_\pi=\al $, then $\pi(J_{n})=\{0\} $ for every $0\leq n<\al $ and then  $\pi(\D)=\pi(J_0)=\{0\}$,
 which is impossible.
Hence
\begin{eqnarray*}\label{}
 \nn n_\pi<\al.
 \end{eqnarray*}
We have now that $\pi(J_{n_\pi+1})\ne \{ 0\} $, but $\pi(J_{n_\pi})=\{ 0\} $.

This means in particular that $\pi $ is contained in the hull of the ideal $J_{n_\pi} $.  But $J_{n_\pi} $ is the kernel of the subset $S_{n_\pi} $. In other words $\pi $ is an element of the closure of the subset ${  S_{n_\pi}} $, but $\pi\not \in \ol{S_{{n_\pi+1}}} $.

There exists therefore a net $(\pi_k)_k\subset \GA_{n_\pi}$, such that $\pi=\limk \pi_k $ in $\widehat \D $.  Hence, there exists  for $\xi\in \H_\pi $ and for any $k $ an element $\xi_k\in\H_{\pi_k} $, such that for any $F\in\D $, we have that
\begin{eqnarray}\label{lim is pi}
 \nn \limk \langle F(\pi_k)(\xi_k),\xi_k\rangle  &= &
 \langle \pi(F)\xi,\xi\rangle .
 \end{eqnarray}
Then $\pi_k $ does not go to infinity in $\widehat G $ because of condition $(2) $.
Hence, either for a subnet, the net $(\pi_k=\pi_{(\mu_k,\ch_k)})_k $ converges in $\GA_{n_\pi} $ to some 
$\pi_\iy=\pi_{(\mu_\iy,\ch_\iy)}\in \GA_{n_\pi} $, and then by condition (3)
\begin{eqnarray*}\label{}
 \nn \Vert \si_{\mu_\iy,\ch_\iy}(F)\circ  P_{\mu_\iy}- F(\mu_k,\ch_k)\circ  P_{\mu_k}\Vert_{\rm op}&= &
 0
 \end{eqnarray*}
for any $F\in\D $. Now for any $k $ we have that
\begin{eqnarray*}\label{}
 \nn \xi_k=P_{\mu_k}(\xi_k) .
\end{eqnarray*}
This shows that
\begin{eqnarray}\label{si is pi}
 \limk \langle \si_{\mu_\iy,\ch_\iy}(F\res L)\circ  P_{\mu_\iy}(\xi_k),P_{\mu_\iy}(\xi_k)\rangle  &= &
 \langle \pi(F)\xi,\xi\rangle , F\in\D.
 \end{eqnarray}
Since in our case $L=\{ \pi_\iy\} $, relation \ref{si is pi} implies that $\ker{ \pi_\iy}\subset \ker \pi $. But then $\pi=\pi_\iy $, since $\pi_\iy $ is completely continuous (see \cite{Dix}, 4.1.11. Corollary).

The other possibility is that the net converges in $\widehat{C^*(G)} $ to a limit set $L $ contained in $ S_{n_\pi+1} $.

Now for   $F\in J_{n_\pi+1} $, it follows from condition $(3) $, that
\begin{eqnarray*}\label{}
\nn \limm \noop{  F(\pi_k)\circ P_{\mu_k}} &= &
 \limm \noop{  F(\pi_k)\circ P_{\mu_k}-\si_{\mu_\iy,\ch_\iy}(F\res L)\circ  P_{\mu_\infty}} \\
 &= &
 0,
 \end{eqnarray*}
which shows that $\limk \noop {  F(\pi_k)}=0 $. But this implies  then by (\ref{lim is pi}) that  $\pi(F)(\xi)=0 $ for every $\xi $. 
Therefore $\pi $ is $0 $ on $J_{n_\pi+1} $.  This contradiction shows that  $\pi=\pi_\iy\in\widehat G $.
\end{proof}

\begin{theorem}\label{dis hat CG}
Let $G=K\ltimes A $ be the semi-direct product of a compact group $K $ with an abelian locally compact group $A $. Then the $C^* $-algebra $\D(G) $ is isomorphic to the group $C^* $-algebra $C^*(G) $.
 \end{theorem}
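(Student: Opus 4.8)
The plan is to show that the inclusion $\widehat{C^*(G)}\subset \D(G)$ established in Proposition \ref{prop0.3.1} is in fact an equality. Both are $C^*$-subalgebras of $\ell^\infty(\widehat G)$ with the same spectrum $\widehat G$ (Proposition \ref{spectre D}), so the natural strategy is: (i) verify that $\widehat{C^*(G)}$ is an ideal of $\D(G)$, or more directly that the quotient $\D(G)/\widehat{C^*(G)}$ has empty spectrum and hence is zero; equivalently (ii) show that the inclusion $\iota:\widehat{C^*(G)}\hookrightarrow\D(G)$ induces a bijection on spectra and that $\widehat{C^*(G)}$ is $\D(G)$-essential, whence $\iota$ is surjective. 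I would carry out (ii) along the following lines.

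\textbf{Step 1: $\widehat{C^*(G)}$ is an ideal in $\D(G)$.} Using the composition series $(I_n)_{0\le n\le\al}$ of $C^*(G)$ with continuous-trace subquotients, and the corresponding stratification $\widehat G=\dot\bigcup\GA_n$, one shows by transfinite induction on $n$ that for $F\in\D(G)$ and $c\in\widehat{C^*(G)}$ the product $F\circ c$ and $c\circ F$ again lie in $\widehat{C^*(G)}$. The point is local: on each Hausdorff stratum $\GA_n$, which is the spectrum of the continuous-trace algebra $I_{n+1}/I_n$, condition (2) of Definition \ref{def0.3.2} gives vanishing at infinity and condition (3) gives precisely the continuity across limit points that is missing from membership in $\ell^\infty(\widehat G)$ alone; multiplying by an element of $C^*(G)$, whose Fourier transform already satisfies all three conditions, one gets an operator field that agrees locally with $\widehat{C^*(G)}$, and one patches using the composition series. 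Thus $\widehat{C^*(G)}$ is a closed two-sided ideal of $\D(G)$.

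\textbf{Step 2: the quotient has empty spectrum.} Suppose $\widehat{C^*(G)}\subsetneq\D(G)$. Then the quotient $C^*$-algebra $Q:=\D(G)/\widehat{C^*(G)}$ is nonzero, hence has an irreducible representation, i.e. $\widehat Q\ne\es$. But $\widehat Q$ is canonically identified with the set of $\pi\in\widehat{\D(G)}$ that kill the ideal $\widehat{C^*(G)}$. By Proposition \ref{spectre D}, $\widehat{\D(G)}=\widehat G$, and every point $\pi=\pi_{(\mu,\chi)}\in\widehat G$ acts on $\widehat{C^*(G)}$ as the evaluation $F\mapsto F(\mu,\chi)$, which is the irreducible representation $\pi_{(\mu,\chi)}$ itself and is certainly not zero on $\widehat{C^*(G)}$ (it is nonzero even on $L^1(G)_c$). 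Hence $\widehat Q=\es$, so $Q=\{0\}$ since $C^*$-algebras with empty spectrum are trivial (they are $CCR$, being quotients of the $CCR$ algebra $\D(G)$, or one simply notes a nonzero $C^*$-algebra always has an irreducible representation). Therefore $\widehat{C^*(G)}=\D(G)$, and combined with the norm equality $\no{\cdot}_\infty=\no{\cdot}_{C^*(G)}$ from Definition \ref{def0.6}, the Fourier transform $\F:C^*(G)\to\D(G)$ is an isometric $*$-isomorphism onto.

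\textbf{Main obstacle.} The delicate point is Step 1: showing that multiplying an arbitrary field $F\in\D(G)$ by a transform $\widehat c$ lands in $\widehat{C^*(G)}$. The estimates in Proposition \ref{prop0.3.1} already show $F\circ\widehat c\in\D(G)$; the extra work is to exhibit an actual $L^1$-function whose transform is $F\circ\widehat c$, for which one argues stratum by stratum on the $\GA_n$ using that on a continuous-trace subquotient the Fourier image is exactly the algebra of sections vanishing at infinity satisfying the Fell-continuity conditions — and these are encoded, by Baggett's description (Theorem \ref{the0.2}) together with condition (3), in membership in $\D(G)$. One must check that the transfinite patching along the composition series is compatible, which is where the $CCR$ property and Corollary 4.1.11 of \cite{Dix} (used already in Proposition \ref{spectre D}) do the essential bookkeeping.
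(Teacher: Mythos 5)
Your Step 2 is fine as far as it goes, but the proposal has a genuine gap at Step 1, and that step carries all the weight. You assert that $\widehat{C^*(G)}$ is a closed two-sided ideal of $\D(G)$, i.e.\ that $F\circ\widehat c$ is again the Fourier transform of an element of $C^*(G)$ for every $F\in\D(G)$ and $c\in C^*(G)$; but the argument offered for this --- that on each stratum $\GA_n$ ``the Fourier image is exactly the algebra of sections vanishing at infinity satisfying the Fell-continuity conditions'' and that one then ``patches'' along the composition series --- is circular: the claim that on each continuous-trace subquotient the Fourier image coincides with the fields satisfying conditions (1)--(3) is precisely a relative version of the theorem you are trying to prove, and no mechanism is given for producing an actual element of $C^*(G)$ whose transform equals $F\circ\widehat c$. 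Note moreover that, given Proposition \ref{spectre D}, the ideal property is \emph{equivalent} to the theorem (an ideal on which no point of the spectrum vanishes has zero quotient, and conversely equality trivially implies the ideal property), so Step 1 is not a reduction of the problem but a restatement of it.

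The paper closes this gap by avoiding the ideal property altogether: having established that $\D(G)$ is a liminal $C^*$-algebra containing $\widehat{C^*(G)}$ (Proposition \ref{prop0.3.1}) and that $\widehat{\D(G)}=\widehat G$ (Proposition \ref{spectre D}), it applies the Stone--Weierstrass theorem for $C^*$-algebras (see \cite{Dix}) to the pair $\widehat{C^*(G)}\subset\D(G)$. That theorem needs exactly what is already in hand: every $\pi_{(\mu,\chi)}\in\widehat{\D(G)}=\widehat G$ restricts on the subalgebra $\widehat{C^*(G)}$ to the evaluation $F\mapsto F(\mu,\chi)$, which is the irreducible representation $\pi_{(\mu,\chi)}\circ\F^{-1}$ and in particular nonzero, and distinct points of $\widehat G$ restrict to inequivalent representations; hence $\widehat{C^*(G)}$ separates $\widehat{\D(G)}\cup\{0\}$ and must equal $\D(G)$. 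If you want to keep your ideal-theoretic framework instead, you must first supply an honest proof of Step 1, which at present is missing.
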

\begin{proof} We know now that $\widehat{\D}=\widehat G $.
 It suffices to apply the theorem of  Stone-Weierstrass to the $C^* $-algebra $\D$ and its subalgebra $\widehat{C^*(G)}$.

\end{proof}
\section{\bf{Examples}}

\begin{example} 

\rm
For all $n\in\N^*$, let $G_n$ the the   the semi-direct product of the compact Lie group $SO(n)$ with the
abelian group $\R^n$. The $C^*$-algebra of this group is discribed by Abdelmoula, Elloumi and Ludwig in \cite{Lud-Ell-Abd}.

We can parameterize the dual space in the following way:
\begin{eqnarray*}
 \Gamma_1&=&\widehat{SO(n-1)}\ti\R_+^*\\
 \Gamma_0&=&\widehat{SO(n)}.
\end{eqnarray*}

For $\rho_\mu\in\widehat{SO(n-1)}, $ and $r\in\R_+^*$, we denote by $K_{\chi_r}=SO(n-1)\ti\R_+^*$ the stabilizer of $\chi_r$.  The projection 
$P_r:L^2(SO(n))\longrightarrow L^2(SO(n))$ are 
$$P_r(\varphi)(x):=\int_{K_{\chi_r}}\rho_\mu(l)\varphi(xl)dl,$$
and for $r=0$ 
$$P_0(\varphi)(x):=\int_{SO(n)}\tau_\la(l)\varphi(xl)dl,\ \ \text{ where } \tau_\la\in\widehat {SO(n)}.$$
For any $f\in L^1(G_n)$ and $\xi\in L^2(SO(n))$ we have that 
\begin{eqnarray*}
& & \tau_{\mu,r}(f)\xi(x):=\int_{SO(n)}\left(\int_{K_{\chi_r}}(\widehat f^2(xly^{-1},y\cdot\chi_r)\rho_\mu(l)dl\right)\xi(y)dy,\ x\in SO(n).\\
& &\tau_{\mu,0}(f)\xi(x):= \int_{SO(n)}\widehat f^2(y,0)\xi(y^{-1}x)dy,\ x\in SO(n).
\end{eqnarray*}

 Let $\D_n$ be the family consisting of all operator fields $F\in \ell^\iy(\widehat{G_n})$ satisfying the following conditions:
 \begin{enumerate}
  \item $F(\ga)$ is a compact operator on $\H_{(\ga)}$ for every $\ga\in\Gamma_1,$
  \item $\underset{\ga\to\iy}{\lim}\noop{F(\ga)}=0,$
  \item \begin{eqnarray*}
   \underset{r\to0}{\lim}\noop{F(\mu,r)\circ P_r-F(\mu,0)\circ P_0}=0.
 \end{eqnarray*}
 \end{enumerate}

Then, the $C^*$-algebra of the group {the group $G_{n} $} is isomorphic to $\D_{n}$ under the Fourier transform.

\end{example}

\begin{example}\label{tinfty}
Define the abelian group $A $ and the compact groups $L $ and $K $ by: 
\rm  
\begin{eqnarray*}\label{}
  \nn A &:= &
  \{ (z_i)_{i\in\N}\in \Z^\iy\vert z_i\ne 0\text{ for a finite number of 
indices}\}\\
\nn   & &\text{ with the discrete topology}.\\
\nn  L&:= &
\{ 1,-1\}\\  
  \nn  K&:= &
L^{\iy}\\
\nn  & &\text{ with the product topology}. 
  \end{eqnarray*}
Then, by \cite{Rei}, Ch. 4, 2, 3.1., we have that 
\begin{eqnarray*}\label{}
  \nn \widehat{A} &= &
  \T^\iy.
  \end{eqnarray*}
  Furthermore, the spectrum $\widehat K $ of the abelian group $K $ is the set of all infinite products 
  \begin{eqnarray*}
 \ch=\ch_1\times \ch_2\times \cdots \in \{1,-1\}\times\{1,-1\}\times\cdots,
 \end{eqnarray*}
where $\ch_j=1 $ almost everywhere.

Define for $m\in\N $ the subgroups  $L^m $ and $L_m $ of $K $ by
\begin{eqnarray*}\label{}
 \nn L_m &:= &
\{1\}\times \{1\}\times\cdots\underset{\text{m-th position}}{\times\{1\}} \times\{1,-1\}\times \{1,-1\}\cdots 
 \end{eqnarray*}
and
\begin{eqnarray*}\label{}
 \nn L^m &:= &
 L\times L\times\cdots\underset{\text{m-th position}}{\times L} \times\{1\}\times \{1\}\cdots
 \end{eqnarray*}

 Then $L^m $ is isomorphic to the quotient group
 $K/L_m $ and it has $2^m $ elements. The Haar measure $dl^m $ on $L^m $ is given by
 \begin{eqnarray*}\label{}
 \nn \int_{L^m}f(l_m)dl^m &= &
\frac{1}{2^m}\left( \sum_{x\in L^m}f(x)\right),\  f\in C(L^m).
 \end{eqnarray*}

A function $f:K\to \C $ is continuous, if and only if for every $\ve>0 $, there exists $m\in \N $ such that
\begin{eqnarray*}\label{}
 \nn \sup_{x\in K, l_m\in L_m} \vert{f(x)-f(xl_m) }\vert &\leq &
 \ve.
 \end{eqnarray*}
Then the Haar measure $dk  $ on $K $ is given according to \cite{Rei}, Ch. 3, 3, example (vi) by
\begin{eqnarray*}\label{}
 \nn \int_K f(k)dk&=&
 \limm  \int_{L^m}f(l_m)dl^m.
 \end{eqnarray*}

For $\chi=(\chi_i)_{i\in\N}\in\widehat{A} $,  the stabilizer $ K_\chi$ is the 
direct product
\begin{eqnarray*}\label{}
  \nn  K_\chi&= &
  \underset{i\in\N}{\prod}K_{\chi_i}
  \end{eqnarray*}
where $K_{\chi_i}=\{ 1,-1\} $ if $\chi_i\in\{ 1,-1\} $ and $K_{\chi_i}=\{ 
1\} $ if $\chi_i\not \in \{ 1,-1\} $.

Let for $m\in\N $
\begin{eqnarray*}\label{}
  \nn \chi_m &=&
  (\chi^m_j)_{j\in\N} \\
  &\text{where}&\\
  \nonumber \chi^m_j&=&
  \Big\{\begin{array}{cc}
\frac{1}{m} &\text{ if }j\leq m,\\
  0&\text{ if }j>m.
\end{array}
  \end{eqnarray*}
Then
\begin{eqnarray*}\label{}
  \nn \limm {\chi_m} &= &
  1
  \end{eqnarray*}
in $\widehat{A} $.
The stabilizer $K_{\ch_m} $ in $K $ is the subgroup
\begin{eqnarray*}
 K_{\ch_m}=\{1\}\times \{1\}\times\cdots\underset{\text{m-th position}}{\times\{1\}} \times\{1,-1\}\times \{1,-1\}\cdots 
 \end{eqnarray*}
 Then 
\begin{eqnarray*}\label{}
  \nn \limm K_{\chi_m}=1=\{ 1\}\ti\{ 1\}\ti\{ 1\}\times\cdots
  \end{eqnarray*}
Choose for every $m\in\N $ the trivial character $\mu_m $ of $K_{\ch_m} $.
Then, according to Definition \ref{Pnuk def}, the dimensions $d_{\mu_m} $ are one and  the projections 
$P_{\mu_m}:\l2K\to \l2K $ are given by
\begin{eqnarray*}\label{}
 \nn P_{\mu_m}(\va)(x) &:= &
 \int_{K_{\ch_m}}(\va(xl_m))dl^m, \va\in C(K), x\in K.
 \end{eqnarray*}
Therefore, by Remark  \ref{a converging net}, the limit set of the sequence $(\pi_{\mu_m,\ch_m}) $ is the spectrum $\widehat{K}
 $ itself and then for any $f\in\l1G $ and $\xi\in\l2K $ we have that
 \begin{eqnarray*}\label{}
 \nn \ta_{\mu_m,\chi_m}(f)\xi(x) &:= &
 \int_{K} \Big(\int_{K_{\ch_m}}\widehat f^2(x  y\inv l, y\cdot \chi_m)dl\Big)(\xi(y)) dy, x\in K,\\
 \nn \ta_{\mu_\iy,\chi_\iy}(f)\xi(x) &:= &
 \int_{K} \widehat f^2(y, 1)\xi(y\inv x)dy, x\in K.
 \end{eqnarray*}

\end{example}

\end{document}